\newtheoremstyle{kai}
{3pt} {3pt} {} {} {\bfseries} {.} {.5em} {}
\makeatletter \setcounter{page}{1}
\def\EquationsBySection{\def\theequation
{\thesection.\arabic{equation}}%
\@addtoreset{equation}{section}}
\newcommand\old[1]{}
\newcommand{\pend}{\hfill \thicklines \framebox(6.6,6.6)[l]{}}
\newcommand{\E}{\mathbb{E}}
\renewenvironment{proof}{\noindent {\it  Proof.} \rm}{\pend}
\newtheorem{theorem}{Theorem}[section]
\newtheorem{lemma}{Lemma}[section]
\newtheorem{remark}{Remark}[section]
\newtheorem{definition}{Definition}[section]
\newtheorem{example}{Example}[section]
\begin{document}
\pagestyle{plain}
\title
{\bf Lyapunov Exponents of Hybrid Stochastic Heat Equations}

\author{Jianhai Bao$^{a,}$\thanks{{\it E-mail address:}
jianhaibao@yahoo.com.cn, xuerong@stams.strath.ac.uk,
C.Yuan@swansea.ac.uk},  Xuerong Mao$^b$, Chenggui
Yuan$^{a}$\\
\\
${}^a$Department of Mathematics,\\
 Swansea University, Swansea SA2 8PP, UK \vspace{2mm}\\
 ${}^b$Department of Mathematics and Statistics,\\
University of Strathclyde, Glasgow  G1 1XH, UK\vspace{2mm}\\
}

\date{}
\maketitle
\begin{abstract}{\rm In this paper, we investigate a class of hybrid
stochastic heat equations.  By explicit formulae of solutions, we
not only reveal the sample Lyapunov exponents but also discuss  the
$p$th moment Lyapnov exponents. Moreover, several examples are
established to demonstrate that unstable (deterministic or
stochastic) dynamical systems can be stabilized by Markovian
switching.
 }\\

\noindent {\bf Keywords:} Stochastic heat equation; Markov chain;
Lyapunov exponent; Stabilization; Large deviation.\\
\noindent{\bf Mathematics Subject Classification (2000)} \ 60H15,
34K40.
\end{abstract}
\noindent

\section{Introduction}
Stabilization of (ordinary) stochastic differential equations (SDEs)
by noise has been studied extensively in the past few years, e.g.,
Arnold et al. \cite{a83}, Has'minskii \cite{k80}, Mao and Yuan
\cite{my06}, Pardoux and Wihstutz \cite{pw88,pw92}, Scheutzow
\cite{s93}. Recently, there are also many works focusing on such
phenomena for stochastic partial differential equations (SPDEs),
e.g., Kwiecinska \cite{k99} and Kwiecinska \cite{k02} discussed by
multiplicative noise
 such problems for heat
equations and a class of deterministic evolution equations,
respectively; some results on almost sure exponential stabilization
of SPDEs were established in Caraballo et al. \cite{clm01} by a
Lyapunov function argument;  stabilization by additive noise on
solutions to semilinear parabolic
 SPDEs with quadratic nonlinearities
 was investigated due to Bl\"omker et al. \cite{bhp07}.

Also, there has been increasing attention to hybrid SDEs (also known
as SDEs with Markovian switching and please see, e.g., the
monographs \cite{my06,yz10}), in which continuous dynamics are
intertwined with discrete events. One of the distinct features of
such systems is that the underlying dynamics are subject to changes
with respect to certain configurations while the continuous-time
Markov chains are used to delineate many practical systems, where
they may experience abrupt changes in their structure and
parameters.

It should be pointed out that there are some papers discussing the
almost sure exponential stability and the $p$th moment exponential
stability of stochastic heat equations without Markov switching,
e.g., Kwiecinska \cite{k99,k01,k02} and Xie \cite{x08}. However,
hybrid SPDEs,  e.g., hybrid stochastic heat equations, have so far
little been studied. In this paper, we are mainly interested in
finding explicit formulae of solutions for a class of hybrid
stochastic heat equations, and compute explicitly the sample
Lyapunov exponents as well as the $p$th moment Lyapunov exponents.
In particular,  due to the involvement of Markov chains, it is much
more complicated to study the $p$th moment Lyapunov exponents of
hybrid stochastic heat equations, and our results can not get
straightforward from \cite{k01, x08}. To cope with the difficulties
arising from the Markov switching, large deviation technique has
been used in this paper. For large deviation of Markov processes, we
refer to, e.g., Donsker and Varadhan \cite{dv75}, Wu \cite{wu00} and
the references therein. On the other hand, we also use our theories
to reveal the stabilization of (stochastic) dynamical systems by
Markovian switching.

The organization of this paper will be arranged as follows: In
Section 2 we recall some notation and notions, and give an
existence-and-uniqueness result for hybrid stochastic heat
equations. We investigate, in Section 3, the explicit formulae of
solutions for hybrid stochastic heat equations, and, in Section 4,
the sample Lyapunov exponents, where two examples are constructed to
show that Markovian switching can be used to stabilize unstable
stochastic dynamical systems. The last section reveals by the large
deviation principle the $p$th moment Lyapunov exponents.

\section{Preliminaries}

Let $\{\Omega,{\mathcal F},\{{\mathcal F}_{t}\}_{t\geq0},
\mathbb{P}\}$ be a complete probability space with a filtration
satisfying the usual conditions, and $B(t)$, $t\ge 0$,  a
real-valued Brownian motion defined on the above probability space.
For a bounded domain $\mathcal {O}\subset\mathbb{R}^n$ with
$C^{\infty}$ boundary $\partial\mathcal {O}$, let $L^2(\mathcal
{O})$ denote the family of all real-valued square integrable
functions, equipped with the usual inner product $\langle
f,g\rangle:=\int_{\mathcal {O}}f(x)g(x)dx, f,g\in L^2(\mathcal
{O})$, and the norm $\|f\|:=\left(\int_{\mathcal
{O}}f^2(x)dx\right)^{\frac{1}{2}}, f\in L^2(\mathcal {O})$. Let
$A:=\sum_{i=1}^n\frac{\partial^2}{\partial x^2}$ be the Laplace
operator, with domain $\mathcal {D}(A):=H^1_0(\mathcal {O})\cap
H^2(\mathcal {O})$,  where $H^m(\mathcal {O}), m=1,2$, consist of
functions of $L^2(\mathcal {O})$ whose derivatives $D^{\alpha}u$  of
order $|\alpha|\leq m$ are in $L^2(\mathcal {O})$, and
$H^m_0(\mathcal {O})$ is the subspace of elements of $H^m(\mathcal
{O})$ vanishing  on $\partial\mathcal {O}$. Furthermore, let
$\{e_n\}_{n\geq1}$, forming an orthonormal basis of $L^2(\mathcal
{O})$, and $\{\lambda_n\}_{n\geq1}\uparrow\infty$ be the eigenvector
and eigenvaue of $-A$ respectively, namely
\begin{equation}\label{eq00}
 -Ae_n=\lambda_ne_n.
\end{equation}
Thus, for any $f\in L^2(\mathcal {O})$, we can write $
f=\sum_{n=1}^{\infty}f_ne_n, \mbox{ where } f_n:=\langle f,
e_n\rangle. $

Let $\mathbb{R}_{+}:=[0,\infty)$ and $N$ be some positive integer.
Let $\{r(t),t\in \mathbb{R}_{+}\}$
  be a right continuous Markov chain on
the probability space $\{\Omega,{\mathcal F},\{{\mathcal
F}_{t}\}_{t\geq0},  \mathbb{P}\}$ taking values in a finite state
space $\mathbb{S}:=\{1,2,...,N\}$, with generator
$\Gamma:=(\gamma_{ij})_{N\times N}$ given by
$$ \mathbb{P}(r(t + \Delta )=j|r(t)=i) = \left\{
\begin{array}{cc}
 \gamma_{ij} \Delta  + o(\Delta ),\ \ & \ \ {\rm  if}\ \ i \ne j, \\
 1 + \gamma_{ii} \Delta  + o(\Delta ),\ \ & \ \ {\rm  if}\ \ i = j, \\
 \end{array} \right.$$
where $\Delta>0$ and $\gamma_{ij}\geq 0$ is
 the transition rate from $i$ to $j$, if $i\ne j$; while $\gamma_{ii}=-\sum_{j\ne
 i}\gamma_{ij}$.  We assume that Markov chain $r(\cdot)$ is independent
 of  Brownian motion $B(\cdot)$. It is known that almost every sample path of $r(t)$
 is a right continuous step function with a finite number of sample
 jumps in any finite sub-interval of $\mathbb{R}_{+}$. We further assume
 that the Markov chain $r(t)$ is irreducible. This is equivalent to
 the condition that, for any $i,j\in\mathbb{S}$, one can find finite
 numbers $i_1,i_2,\cdots,i_k\in\mathbb{S}$ such that
$ \gamma_{i,i_1}\gamma_{i_1,i_2}\cdots\gamma_{i_k,j}>0. $ The
algebraic interpretation of irreducibility is
\mbox{ran}$(\Gamma)=N-1$. Under this condition, the Markov chain
$r(t)$ has a unique stationary probability distribution
$\pi:=(\pi_1,\pi_2,\cdots,\pi_N)$ which can be determined by solving
the following linear equation $ \pi\Gamma=0 $ subject to $
\sum_{j=1}^{N}\pi_j=1  \mbox{ and } \pi_j>0, \forall j\in\mathbb{S}.
$

In this paper we consider hybrid stochastic heat equation:
\begin{equation}\label{eq1}
\begin{cases}
\frac{\partial u(t,x)}{\partial t}=A
u(t,x)+\alpha(r(t))u(t,x)+\beta(r(t))u(t,x)\dot{B}(t),
  &x\in\mathcal {O}, \ t > 0;\\
u(t,x)=0,  &x\in\partial\mathcal {O},\ t>0;\\
u(0,x)=u^0(x),  &x\in\mathcal {O}.
\end{cases}
\end{equation}
Here $u^0$ is a
 $\mathcal {D}(A)$-valued $\mathcal {F}_0$-measurable random
 variable such that $\mathbb{E}\|u^0\|^p<\infty$ for any $p>0$,
and  independent of $r(\cdot)$ and $B(\cdot)$.   Moreover,
 $\alpha,\beta$ are mappings from
$\mathbb{S}\rightarrow\mathbb{R}$ and we shall write
$\alpha_i:=\alpha(i),\ \beta_i:=\beta(i)$ in what follows.  In
general, $u(t,x)$ is referred to as the state and $r(t)$ is regarded
as the mode.  In its operation, hybrid system will switch from one
mode to another according to the law of Markov chain. In this paper,
we shall write $u(t):=u(t,\cdot)$, and $u:=\{u(t)\}_{t\in
\mathbb{R}_+}$. Hence, for each $t\geq 0$, $u(t)$ is an
$L^2(\mathcal {O})$-valued random variable while $u$ is an
$L^2(\mathcal {O})$-valued stochastic process.  Let us fix an
interval $[0,T]$ for arbitrary $T>0$ and recall the definition of
solution to Eq. \eqref{eq1} from Da Prato et al. \cite{pit82}.

\begin{definition}
{\rm  An $L^2(\mathcal {O})$-valued predictable process
$u=\{u(t)\}_{t\in [0,T]}$ is called the solution to Eq. \eqref{eq1}
if the following conditions are satisfied:
\begin{enumerate}
\item[\textmd{(i)}]
$u\in C([0,T];L^2(\mathcal {O}))$ and, for any $t\in[0,T]$,
$u(t)\in\mathcal {D}(A)$ a.s.;
\item[\textmd{(ii)}]
Stochastic integral equation
\begin{equation*}
\begin{split}
u(t,x)=u^0(x)+\int_0^t[A
u(s,x)+\alpha(r(s))u(s,x)]ds+\int_0^t\beta(r(s))u(s,x)dB(s)
\end{split}
\end{equation*}
 holds a.s. for any
$t\in[0,T]$ and $x\in\mathcal {O}$.
\end{enumerate}
}
\end{definition}

\begin{theorem}\label{theorem1}
{\rm   Eq. \eqref{eq1} admits a unique  solution $u=\{u(t)\}_{t\in
\mathbb{R}_+}$ such that $ \E\|u(t)\|^p < \infty, t >0, p>0. $}
\end{theorem}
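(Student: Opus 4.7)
The plan is to exploit the fact that the Markov chain $r(\cdot)$ has, almost surely, only finitely many jumps on any compact time interval. This reduces Eq.~\eqref{eq1} to a sequence of classical linear stochastic heat equations with constant (though random) coefficients, for which existence and uniqueness is available from Da Prato et al.\ \cite{pit82}, and the pieces can then be glued together at the jump times.

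Concretely, I would fix $T>0$ and denote by $0=\tau_0<\tau_1<\tau_2<\cdots$ the successive jump times of $r(\cdot)$, with $\tau_k\uparrow\infty$ a.s. On the random interval $[\tau_k,\tau_{k+1})$ the mode $r(t)\equiv r(\tau_k)$ is constant, so that $\alpha(r(t))=\alpha_{r(\tau_k)}$ and $\beta(r(t))=\beta_{r(\tau_k)}$ are random constants. Conditioning on $\mathcal{G}:=\sigma(r(s):s\ge 0)$, the restriction of Eq.~\eqref{eq1} to this piece becomes a standard linear stochastic heat equation with multiplicative noise, so classical theory yields a unique strong solution starting from any $\mathcal{D}(A)$-valued initial datum. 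Starting from $u^0$ and iteratively using $u(\tau_k^-)$ as the initial condition on the next interval---the smoothing property of the heat semigroup ensures $u(\tau_k^-)\in\mathcal{D}(A)$ a.s.---produces an $L^2(\mathcal{O})$-valued predictable process satisfying Definition~2.1 on $[0,T]$, and since $T>0$ is arbitrary, on $\mathbb{R}_+$. Pathwise uniqueness on each piece combined with the gluing procedure delivers uniqueness of $u$.

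For the $p$-th moment bound the strategy is again piecewise. Either working with the strong solution (using $u(t)\in\mathcal{D}(A)$ together with the dissipativity $\langle Au,u\rangle\le 0$) or through Galerkin approximations along $\{e_n\}$, It\^o's formula applied to $\|u(t)\|^p$ yields
\[
\E\bigl[\|u(t)\|^p\,\bigm|\,\mathcal{G}\bigr]\le \exp\!\bigl(K_p(t-\tau_k)\bigr)\,\E\bigl[\|u(\tau_k)\|^p\,\bigm|\,\mathcal{G}\bigr],\qquad t\in[\tau_k,\tau_{k+1}),
\]
where $K_p$ depends only on $p$ and $\max_{i\in\mathbb{S}}(|\alpha_i|+\beta_i^2)$. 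Since this constant is the same for every mode (the state space $\mathbb{S}$ being finite), iterating across all pieces and removing the conditioning gives $\E\|u(t)\|^p\le e^{K_p t}\,\E\|u^0\|^p<\infty$ for every $t>0$ and $p>0$. For $p\in(0,2)$ one can first establish the inequality for $p=2$ and then apply Jensen's inequality.

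The main technical hurdle lies in the It\^o/energy step: since $A$ is unbounded, It\^o's formula cannot be applied naively to $\|u(t)\|^p$. The standard remedy is to truncate along the eigenbasis $\{e_n\}$, in which case the equation on each piece reduces to a finite system of scalar It\^o SDEs with constant coefficients, derive the moment estimate at the projected level, and then pass to the limit using Fatou's lemma and the regularity $u(t)\in\mathcal{D}(A)$. Once that is in place, the Markov-switching structure contributes only a uniform multiplicative factor because the coefficients take finitely many values, so the random jump locations never enter the final bound.
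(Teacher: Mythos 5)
Your proposal follows essentially the same route as the paper: decompose $[0,T]$ at the jump times $\{\tau_k\}$ of the Markov chain, solve the resulting constant-coefficient stochastic heat equation on each piece (the paper cites \cite[Proposition 1]{k99} for this, which already supplies the $p$th moment bound you re-derive via Galerkin truncation), and glue the solutions together, letting $T\to\infty$. The argument is correct; your explicit uniform estimate $\E\|u(t)\|^p\le e^{K_p t}\,\E\|u^0\|^p$ is simply a quantitative refinement of the step the paper handles by citation.
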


\begin{proof}
Recall that almost every sample path of $r(\cdot)$ is a
right-continuous step function with a finite number of sample jumps
in any finite time. So there is a sequence $\{\tau_k\}_{k\geq 0}$ of
stopping times such that $\tau_0=0$,  $\lim_{k\to
\infty}\tau_k=\infty$ a.s. and $ r(t)=r(\tau_k) \mbox{ on }
\tau_k\leq t<\tau_{k+1} \mbox{ for } \forall k\geq0. $ Let $T>0$ be
arbitrary . We first consider Eq. \eqref{eq1} on
$t\in[0,\tau_1\wedge T)$ which becomes
\begin{equation*}
\begin{cases}
\frac{\partial u(t,x)}{\partial t}=A
u(t,x)+\alpha(r(0))u(t,x)+\beta(r(0))u(t,x)\dot{B}(t),  &x\in\mathcal {O};\\
u(t,x)=0, &x\in\partial\mathcal {O};\\
u(0)=u^0(x), &x\in\mathcal {O}.
\end{cases}
\end{equation*}
By \cite[Proposition 1]{k99}, Eq. \eqref{eq1} has a unique solution
$u\in C([0,\tau_1\wedge T];L^2(\mathcal {O}))$ and $u(t)\in\mathcal
{D}(A)$ for $t\in[0,\tau_1\wedge T]$ a.s. while $\E\|u(\tau_1\wedge
T)\|^p <\infty$. Setting $u^1(x):=u(\tau_1\wedge T,x)$, we next
consider Eq. \eqref{eq1} on $t\in[\tau_1\wedge T,\tau_2\wedge T)$
which becomes
\begin{equation*}
\begin{cases}
\frac{\partial u(t,x)}{\partial t}=A
u(t,x)+\alpha(r(\tau_1))u(t,x)+\beta(r(\tau_1))u(t,x)\dot{B}(t), &x\in\mathcal {O};\\
u(t,x)=0, &x\in\partial\mathcal {O};\\
u(\tau_1\wedge T,x)=u^1(x), &x\in\mathcal {O}.
\end{cases}
\end{equation*}
Again by \cite[Proposition 1.]{k99},  Eq. \eqref{eq1} has a unique
solution $u\in C([\tau_1\wedge T,\tau_2\wedge T];L^2(\mathcal {O}))$
and $u(t)\in\mathcal {D}(A)$ for $t\in[\tau_1\wedge T,\tau_2\wedge
T]$  a.s. while $\E\|u(\tau_2\wedge T)\|^p <\infty$. Repeating this
procedure, we see that Eq. \eqref{eq1} has a unique  solution $u \in
C([0,T];L^2(\mathcal {O}))$, and, for any $t\in [0,T]$,
$u(t)\in\mathcal {D}(A)$ a.s. while $\E\|u(T)\|^p <\infty$. Since
$T$ is arbitrary, the proof is complete.
\end{proof}

We conclude this section by defining the sample Lyapunov exponent
and the $p$th moment Lyapunov exponent for the solution of Eq.
\eqref{eq1}

\begin{definition}
 {\rm The limit
\begin{equation*}
\lambda(u^0):=\limsup_{t\rightarrow\infty}\frac{1}{t}\log(\|u(t)\|)
\end{equation*}
is called the sample Lyapunov exponent, while
 the limit
\begin{equation*}
\gamma_p(u^0):=\limsup\limits_{t\rightarrow\infty}\frac{1}{t}\log
\mathbb{E}(\|u(t)\|^p)
\end{equation*}
 is called the $p$th moment Lyapunov exponent.}
\end{definition}

If $\lambda(u^0) <0$ a.s. for any initial data $u^0$ (obeying the
conditons imposed above of course), then any solution of Eq.
(\ref{eq1}) will converge to zero exponentially with probability
one.  In this case, we say that the solution of Eq. (\ref{eq1}) is
almost surely exponentially stable.  Similarly, if $\gamma_p(u^0)
<0$ for any $u^0$, then the  solution of Eq. (\ref{eq1}) is
exponentially stable in the $p$th moment.

\section{Explicit Solutions of Hybrid  Stochastic Heat Equations}

Let us first discuss  hybrid stochastic heat equation with external
forces and Dirichlet boundary conditions
\begin{equation}\label{eq2}
\begin{cases}
\frac{\partial v(t,x)}{\partial t}=A v(t,x)+\alpha(r(t))v(t,x),\
& x\in\mathcal {O},\ t >0;\\
v(t,x)=0, &x\in\partial\mathcal {O}, \ t>0; \\
v(0,x)=u^0(x), & x\in\mathcal {O}.
\end{cases}
\end{equation}
 In the sequel, we shall denote $u_n^0:=\langle
u^0,e_n\rangle$ for $n\geq1$.  In general, $u_n^0$ is a random
variable but it becomes a (non-random) number if $u^0$ is
deterministic. When $u^0$ is deterministic and $u^0\not= 0$, we set
$n_0:=\inf\{n:u^0_n\neq 0\}$.

\begin{theorem} \label{thm3.1}
 {\rm The unique  solution of Eq. \eqref{eq2} has the explicit form
\begin{equation}\label{eq6}
v(t,x)=\sum\limits_{n=1}^{\infty}\exp\Big\{-\lambda_nt+\int_0^t\alpha(r(s))ds\Big\}u_n^0
e_n(x), \ \ \ t\geq0, x\in\mathcal {O}.
\end{equation}
}
\end{theorem}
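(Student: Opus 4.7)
The natural strategy is separation of variables: expand the solution in the eigenbasis $\{e_n\}$ of $-A$ and reduce the PDE to a countable family of scalar linear ODEs whose random coefficients are piecewise constant in time. Write $v(t,x)=\sum_{n\geq 1}v_n(t)e_n(x)$ with $v_n(t):=\langle v(t),e_n\rangle$, so that $v_n(0)=u_n^0$.

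Taking the inner product of \eqref{eq2} with $e_n$ and using the self-adjointness of $A$ on $\mathcal{D}(A)$ together with \eqref{eq00}, I would arrive at
$$\dot v_n(t)=\bigl(-\lambda_n+\alpha(r(t))\bigr)v_n(t),\qquad v_n(0)=u_n^0.$$
Since almost every sample path of $r(\cdot)$ is a right-continuous step function with only finitely many jumps on any bounded time set (cf.\ the sequence $\{\tau_k\}$ from the proof of Theorem~\ref{theorem1}), the coefficient $-\lambda_n+\alpha(r(t))$ is piecewise constant in $t$ on every sample path. Solving the scalar ODE on each interval $[\tau_k,\tau_{k+1})$ and pasting at the jumps yields pathwise
$$v_n(t)=u_n^0\exp\Bigl\{-\lambda_n t+\int_0^t\alpha(r(s))ds\Bigr\},$$
and reassembling $v(t,x)=\sum_n v_n(t)e_n(x)$ produces exactly formula \eqref{eq6}.

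It then remains to check that this candidate is a solution in the sense defined in Section~2. Since $\bigl|\int_0^t\alpha(r(s))ds\bigr|\leq t\max_{i\in\mathbb{S}}|\alpha_i|$ pathwise, each Fourier coefficient satisfies $|v_n(t)|\leq |u_n^0|e^{Ct}e^{-\lambda_n t}$ for a pathwise constant $C$, so Parseval's identity combined with $\lambda_n\to\infty$ gives convergence in $L^2(\mathcal{O})$ and continuity in $t$. To verify $v(t)\in\mathcal{D}(A)$ one controls $\sum_n\lambda_n^2 v_n(t)^2$ using either the rapid decay $\lambda_n^2 e^{-2\lambda_n t}$ for $t>0$ or the assumption $u^0\in\mathcal{D}(A)$ at $t=0$. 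Termwise differentiation in $t$ and application of $A$ are justified by the same exponential decay on compact subsets of $(0,\infty)$, while the Dirichlet boundary condition is automatic from $e_n|_{\partial\mathcal{O}}=0$. Uniqueness follows from Theorem~\ref{theorem1} specialized to $\beta\equiv 0$.

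The main obstacle is the interchange of summation with $\partial_t$ and $A$, i.e.\ checking that the formally reassembled series lies in $\mathcal{D}(A)$ and satisfies the integrated form of \eqref{eq2}. This is the standard smoothing property of the heat semigroup; crucially, the Markov chain enters only through the scalar multiplicative factor $\exp\{\int_0^t\alpha(r(s))ds\}$, which is adapted but does not interact with the spatial variable, so the spectral verification can be carried out entirely pathwise and reduces to the deterministic parabolic theory already used for \cite[Proposition~1]{k99}.
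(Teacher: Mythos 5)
Your proposal is correct and follows essentially the same route as the paper: both exploit the piecewise constancy of $r(\cdot)$ between its jump times together with the eigenfunction expansion of $-A$, the only difference being that you project onto the modes $e_n$ first and paste the scalar ODE solutions across the jumps, whereas the paper pastes the explicit constant-coefficient heat-equation solutions across the intervals $[\tau_k,\tau_{k+1})$ and combines the exponentials via orthonormality. Your extra verification that the reassembled series is a genuine solution (convergence, membership in $\mathcal{D}(A)$, the integrated equation) is sound and fills in what the paper outsources to the cited constant-coefficient result on each interval.
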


\begin{proof}
Clearly, by Theorem \ref{theorem1} we can conclude that \eqref{eq2}
has a unique solution $v=\{v(t)\}_{t\geq0}$. Let $\{\tau_k\}_{k\ge
0}$ be the same as defined in the proof of Theorem \ref{theorem1}.
For $t\in[0,\tau_1)$ Eq. \eqref{eq2} can be written as
\begin{equation*}
\begin{cases}
\frac{\partial v(t,x)}{\partial t}=A v(t,x)+\alpha(r(0))v(t,x),\
& x\in\mathcal {O};\\
v(t,x)=0,  &x\in\partial\mathcal {O};\\
v(0,x)=u^0(x), & x\in\mathcal {O}.
\end{cases}
\end{equation*}
It is well known (see, e.g., \cite{k99, pz92}) that this heat
equation has  explicit solution on $t\in[0,\tau_1]$
\begin{equation}\label{eq3}
v(t,x)=\sum\limits_{n=1}^{\infty}\exp\{(-\lambda_n+\alpha(r(0))t)\}u_n^0
e_n(x).
\end{equation}
Set $v^1(x):=v(\tau_1,x)$ and consider
 Eq. \eqref{eq2} for $t\in[\tau_1,\tau_2)$, namely
\begin{equation*}
\begin{cases}
\frac{\partial v(t,x)}{\partial t}=A
v(t,x)+\alpha(r(\tau_1))v(t,x),\
 &x\in\mathcal {O};\\
v(t,x)=0, & x\in\partial\mathcal {O};\\
v(\tau_1,x) = v^1(x), &x\in\mathcal {O}.
\end{cases}
\end{equation*}
Again this heat equation has explicit solution on
$t\in[\tau_1,\tau_2)$
\begin{equation}\label{eq4}
v(t,x)=\sum\limits_{n=1}^{\infty}\exp\{(-\lambda_n+\alpha(r(\tau_1))(t-\tau_1))\}\langle
v^1,e_n\rangle e_n(x).
\end{equation}
Letting $t=\tau_1$ in \eqref{eq3} and substituting $v^1$ into
\eqref{eq4}, we derive that
\begin{equation*}
\begin{split}
v(t,x)&=\sum\limits_{n=1}^{\infty}\exp\{(-\lambda_n+\alpha(r(\tau_1))(t-\tau_1))\}\\
&\times\Big\langle
\sum\limits_{j=1}^{\infty}\exp\{(-\lambda_j+\alpha(r(0))\tau_1)\}u_j^0 e_j,e_n\Big\rangle e_n(x)\\
&=\sum\limits_{n=1}^{\infty}\exp\{(-\lambda_n+\alpha(r(\tau_1))(t-\tau_1))\}\exp\{(-\lambda_n+\alpha(r(0))\tau_1)\}u^0_n e_n(x)\\
&=\sum\limits_{n=1}^{\infty}\exp\{-\lambda_nt+\alpha(r(0))\tau_1+\alpha(r(\tau_1))(t-\tau_1)\}u^0_n e_n(x)\\
&=\sum\limits_{n=1}^{\infty}\exp\Big\{-\lambda_nt+\int_0^t\alpha(r(s))ds\Big\}u^0_n
e_n(x).
\end{split}
\end{equation*}
Repeating this  procedure, we  obtain the required assertion
(\ref{eq6}). The proof is complete.
\end{proof}

\begin{theorem}\label{theorem2}
{\rm  The solution of Eq. \eqref{eq2} has the following properties:
\begin{enumerate}
\item[\textmd{(1)}]
 If $u^0$ is deterministic and $u^0\not= 0$, then
\begin{equation}\label{eq5}
\lim\limits_{t\rightarrow\infty}\frac{1}{t}\log(\|v(t)\|)
=-\Big(\lambda_{n_0}-\sum\limits_{j=1}^{N}\pi_j\alpha_j\Big) \quad
\mbox{ a.s.}
\end{equation}
 In particular,  the solution  of Eq.
\eqref{eq2} with initial data $u^0$ will converge exponentially to
zero with probability one if and only if
\begin{equation*}
\lambda_{n_0}-\sum\limits_{j=1}^{N}\pi_j\alpha_j>0.
\end{equation*}
\item[\textmd{(2)}]For any initial data  $u^0$,
\begin{equation}\label{eq13}
\limsup\limits_{t\rightarrow\infty}\frac{1}{t}\log(\|v(t)\|)
 \leq-\Big(\lambda_1-\sum\limits_{j=1}^{N}\pi_j\alpha_j\Big)
 \quad \mbox{ a.s.}
  \end{equation}
In particular, the  solution  of Eq. \eqref{eq2} is almost surely
exponentially  stable if
\begin{equation*}
\lambda_1-\sum\limits_{j=1}^{N}\pi_j\alpha_j>0.
\end{equation*}
\end{enumerate}
}
\end{theorem}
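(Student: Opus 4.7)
The plan is to combine the explicit representation in Theorem \ref{thm3.1} with Parseval's identity and the ergodic theorem for finite-state irreducible Markov chains.

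First I would use the orthonormality of $\{e_n\}$ in $L^2(\mathcal{O})$ to write
\begin{equation*}
\|v(t)\|^2=\exp\Big\{2\int_0^t\alpha(r(s))ds\Big\}\sum_{n=1}^{\infty}e^{-2\lambda_n t}|u_n^0|^2,
\end{equation*}
so that
\begin{equation*}
\frac{1}{t}\log\|v(t)\|=\frac{1}{t}\int_0^t\alpha(r(s))ds+\frac{1}{2t}\log\Big(\sum_{n=1}^{\infty}e^{-2\lambda_n t}|u_n^0|^2\Big).
\end{equation*}
Because $r(\cdot)$ is irreducible on the finite state space $\mathbb{S}$, the classical ergodic theorem yields
\begin{equation*}
\lim_{t\to\infty}\frac{1}{t}\int_0^t\alpha(r(s))ds=\sum_{j=1}^N\pi_j\alpha_j\quad\text{a.s.},
\end{equation*}
so everything reduces to analyzing the series term.

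For part (1), with $u^0$ deterministic and $u_{n_0}^0\ne 0$, I would estimate the series from both sides. For a lower bound, drop all but the $n_0$ term to get $\sum\ge e^{-2\lambda_{n_0}t}|u_{n_0}^0|^2$. For an upper bound, factor out $e^{-2\lambda_{n_0}t}$ and use $\lambda_n\ge\lambda_{n_0}$ for $n\ge n_0$ to obtain $\sum\le e^{-2\lambda_{n_0}t}\|u^0\|^2$. Taking $\frac{1}{2t}\log$ and letting $t\to\infty$ makes both bounds tend to $-\lambda_{n_0}$, which combined with the ergodic limit gives \eqref{eq5}.

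For part (2), I would apply the same upper-bound argument starting from $n=1$: since $\lambda_n\ge\lambda_1$,
\begin{equation*}
\sum_{n=1}^{\infty}e^{-2\lambda_n t}|u_n^0|^2\le e^{-2\lambda_1 t}\|u^0\|^2,
\end{equation*}
and $\|u^0\|<\infty$ a.s.\ because $\mathbb{E}\|u^0\|^p<\infty$. Therefore $\frac{1}{2t}\log(\cdots)\le -\lambda_1+\frac{1}{2t}\log\|u^0\|^2\to -\lambda_1$ a.s., and taking $\limsup$ in the identity above yields \eqref{eq13}. The stability conclusions in both parts are immediate from the sign of the resulting exponent.

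The only subtle point is the ergodic statement for $\int_0^t\alpha(r(s))ds$; it is standard for irreducible finite-state continuous-time Markov chains (as quoted e.g.\ in \cite{my06,yz10}) so I would simply cite it. All other steps are elementary manipulations of the explicit series, and the randomness of $u^0$ in part (2) causes no difficulty because the integrability hypothesis makes $\|u^0\|$ a.s.\ finite.
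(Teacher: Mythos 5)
Your proposal is correct and follows essentially the same route as the paper's own proof: both start from the explicit series representation of Theorem \ref{thm3.1}, squeeze the spatial series between the single $n_0$-term from below and $e^{-2\lambda_{n_0}t}\|u^0\|^2$ (resp.\ $e^{-2\lambda_1 t}\|u^0\|^2$) from above, and then invoke the ergodic theorem for the irreducible finite-state Markov chain to handle $\frac{1}{t}\int_0^t\alpha(r(s))\,ds$. The only cosmetic difference is that you factor out the $\exp\{2\int_0^t\alpha(r(s))ds\}$ term before estimating the series, whereas the paper keeps it inside; the content is identical.
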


\begin{remark}
{\rm In case (1) above, since $\lambda_{n_0}$ depends on the initial
data $u^0,$
  we only know the asymptotic behavior of  the solution with initial data $u^0$.
   However, in case (2), the estimate on the sample Lyapunov exponent holds
   for any initial data whence
    the   solution  of Eq. \eqref{eq2} is almost surely exponentially  stable
    if the right-hand-side term of \eqref{eq13} is negative.}
\end{remark}

\begin{proof} (1)
For any $t>0$,  it follows from \eqref{eq6} that
\begin{eqnarray}\label{eq7}
\frac{1}{t}\log(\|v(t)\|) &=
&\frac{1}{t}\log\Big(\sum\limits_{n=n_0}^{\infty}
\Big|\exp\Big\{-\lambda_{n_0}t+\int_0^t\alpha(r(s))ds\Big\}u^0_n\Big|^2\Big)^{1/2}\\
&\leq &
\frac{1}{t}\Big(-\lambda_{n_0}t+\int_0^t\alpha(r(s))ds+\log(\|u^0\|)\Big),
 \nonumber
\end{eqnarray}
since $\lambda_n,n\geq1$, are increasing. On the other hand, we can
also derive  that
\begin{equation}\label{eq8}
\begin{split}
\frac{1}{t}\log(\|v(t)\|)&=
\frac{1}{t}\log\Big(\sum\limits_{n=n_0}^{\infty}
\Big|\exp\Big\{-\lambda_{n_0}t+\int_0^t\alpha(r(s))ds\Big\}u^0_n\Big|^2\Big)^{1/2}\\
&\geq\frac{1}{t}\Big(-\lambda_{n_0}t+\int_0^t\alpha(r(s))ds+\log(|u^0_{n_0}|)\Big).
\end{split}
\end{equation}
 Letting $t\rightarrow\infty$
in both \eqref{eq7} and \eqref{eq8} and taking into account the
ergodic property of  Markov chains, we obtain the  first assertion
\eqref{eq5}.

(2) For any initial data $u^0$,  we observe from \eqref{eq7} that
\begin{equation*}
\frac{1}{t}\log(\|v(t)\|)\leq\frac{1}{t}\Big(-\lambda_1t+\int_0^t\alpha(r(s))ds+\log(\|u^0\|)\Big).
\end{equation*}
Therefore, the ergodic property of Markov chains yields the other
assertion \eqref{eq13}.
\end{proof}

\begin{example}
{\rm Let $r(t), t\geq0$, be a right-continuous Markov chain taking
values in $\mathbb{S}=\{1,2,3\}$ with the generator
\begin{equation*}
\Gamma=\left(
\begin{array}{ccc}
 -2 & 1 & 1 \\
 3 & -4 & 1 \\
 1 & 1 & -2
\end{array}
\right).
\end{equation*}
It is straightforward to see that the unique stationary probability
distribution of the Markov chain $r(t)$ is
\begin{equation*}
\pi=\left(\frac{7}{15},\frac{1}{5},\frac{1}{3}\right).
\end{equation*}
Let us consider  hybrid heat equation
\begin{equation}\label{eq27}
\begin{cases}
\frac{\partial v(t,x)}{\partial t}=A v(t,x)+\alpha(r(t))v(t,x),\
t>0, x\in(0,\pi);\\
v(t,0)=v(t,\pi)=0, t>0; \ \ \ \ v(0,x)= u^0(x), x\in(0,\pi),
\end{cases}
\end{equation}
where $u^0(x)=\sqrt{2/\pi} \sin x$ for $x\in (0, \pi)$.   Let
$\alpha(1)=0.1, \alpha(2)=1.5, \alpha(3)=0.2$. Moreover, we recall
that $e_n(x)=\sqrt{2/\pi}\sin nx,\ n=1,2,3,\cdots,$ are
eigenfunctions of $-A$, with positive and increasing eigenvalues
$\lambda_n=n^2$, and form an orthonormal basis of $L^2(\mathcal
{O})$.

To see what this example shows us, we regard Eq. \eqref{eq27} as the
result of the following three equations
\begin{equation}\label{eq28}
\begin{cases}
\frac{\partial v(t,x)}{\partial t}=A v(t,x)+\frac{1}{10}v(t,x),\
t>0, x\in(0,\pi);\\
v(t,0)=v(t,\pi)=0, \ t>0; \ \  v(0,x)= u^0(x), x\in(0,\pi),
\end{cases}
\end{equation}

\begin{equation}\label{y280}
\begin{cases}
\frac{\partial v(t,x)}{\partial t}=A v(t,x)+\frac{3}{2}v(t,x),\
t>0, x\in(0,\pi);\\
v(t,0)=v(t,\pi)=0, \ t>0; \ \
 v(0,x)= u^0(x), x\in(0,\pi),
\end{cases}
\end{equation}
and
\begin{equation}\label{y0028}
\begin{cases}
\frac{\partial v(t,x)}{\partial t}=A v(t,x)+\frac{1}{5}v(t,x),\
t>0, x\in(0,\pi);\\
v(t,0)=v(t,\pi)=0, \ t>0; \ \
 v(0,x)= u^0(x), x\in(0,\pi),
\end{cases}
\end{equation}
switching from one to another according to the movement of the
Markov chain $r(t)$.  We observe that the solutions of Eq.
\eqref{eq28} and Eq. \eqref{y0028} will converge exponentially to
zero  since the Lyapunov exponents  are $-0.9$ and $-0.8$,
respectively, while the solution of Eq. \eqref{y280} will explode
exponentially  since
 the Lyapunov exponent is $0.5$.  However, as the result of Markovian switching,
 the overall behaviour, i.e. the solution of Eq. \eqref{eq27}
 will converge exponentially to zero,  since,
by Theorem \ref{theorem2},  the  solution of Eq. \eqref{eq27} obeys
\begin{equation*}
\lim \limits_{t\rightarrow\infty}\frac{1}{t}\log(\|v(t)\|) =
-\frac{8}{15}.
\end{equation*}
 }
\end{example}

To get the explicit expression for the  solution of Eq. \eqref{eq1},
we need one more result.

\begin{lemma}
 {\rm \cite[Theorem 5.22, p182]{my06} For any positive integer $n$
and $t\geq0$, hybrid SDE
\begin{equation}\label{eq9}
dz_n(t)=(-\lambda_n+\alpha(r(t)))z_n(t)dt+\beta(r(t))z_n(t)dB(t)
\end{equation}
with initial condition $z_n(0) =u^0_n$ has the explicit solution
\begin{equation}\label{eq12}
\begin{split}
z_n(t)
=u^0_n\exp\Big\{-\lambda_nt+\int_0^t\Big[\alpha(r(s))-\frac{1}{2}\beta^2(r(s))\Big]ds
+\int_0^t\beta(r(s))dB(s)\Big\}.
\end{split}
\end{equation}
}
\end{lemma}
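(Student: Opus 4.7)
The plan is to verify the proposed formula \eqref{eq12} by combining a pathwise piecewise reduction with the classical solution of geometric Brownian motion, and then to confirm correctness either by patching or by a direct Itô computation. Since almost every sample path of $r(\cdot)$ is a right-continuous step function with only finitely many jumps on any bounded interval, I would reuse the stopping times $\{\tau_k\}_{k\geq 0}$ from the proof of Theorem \ref{theorem1}. On each stochastic interval $[\tau_k,\tau_{k+1})$ the coefficients $\alpha(r(t))$ and $\beta(r(t))$ are frozen at the constants $\alpha(r(\tau_k))$ and $\beta(r(\tau_k))$, so Eq. \eqref{eq9} becomes a linear SDE with constant coefficients whose unique solution is the standard geometric Brownian motion
$$z_n(t)=z_n(\tau_k)\exp\Big\{\big(-\lambda_n+\alpha(r(\tau_k))-\tfrac{1}{2}\beta^2(r(\tau_k))\big)(t-\tau_k)+\beta(r(\tau_k))\big(B(t)-B(\tau_k)\big)\Big\}.$$

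Next I would patch the segments together by induction on $k$, using continuity of $z_n$ at each $\tau_k$ (the jumps of $r$ affect only the coefficients, not the state, so $z_n$ has no jumps). Starting from $z_n(0)=u_n^0$ and composing the exponentials across $[0,\tau_1),[\tau_1,\tau_2),\ldots,[\tau_k,t)$, the exponents add up, and because $r(\cdot)$ is piecewise constant the finite sums $\sum_{j}\int_{\tau_j}^{\tau_{j+1}\wedge t}[\alpha(r(\tau_j))-\tfrac{1}{2}\beta^2(r(\tau_j))]\,ds$ and $\sum_{j}\int_{\tau_j}^{\tau_{j+1}\wedge t}\beta(r(\tau_j))\,dB(s)$ collapse precisely to $\int_0^t[\alpha(r(s))-\tfrac{1}{2}\beta^2(r(s))]\,ds$ and $\int_0^t\beta(r(s))\,dB(s)$ respectively. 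This yields the required expression \eqref{eq12}.

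As a cleaner alternative I would verify \eqref{eq12} directly by Itô's formula: set $Y(t):=-\lambda_n t+\int_0^t[\alpha(r(s))-\tfrac{1}{2}\beta^2(r(s))]\,ds+\int_0^t\beta(r(s))\,dB(s)$ so that $z_n(t)=u_n^0 e^{Y(t)}$. Since $dY(t)=[-\lambda_n+\alpha(r(t))-\tfrac{1}{2}\beta^2(r(t))]\,dt+\beta(r(t))\,dB(t)$ and $d\langle Y\rangle_t=\beta^2(r(t))\,dt$, applying Itô to $f(y)=u_n^0 e^y$ yields $dz_n(t)=z_n(t)\,dY(t)+\tfrac{1}{2}z_n(t)\,d\langle Y\rangle_t=(-\lambda_n+\alpha(r(t)))z_n(t)\,dt+\beta(r(t))z_n(t)\,dB(t)$, which is exactly \eqref{eq9}. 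Uniqueness is automatic because on each mode the linear coefficients are globally Lipschitz.

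I do not expect any real obstacle here; the only point that needs care is justifying that $\int_0^t\beta(r(s))\,dB(s)$ is well-defined as an Itô integral and that it coincides with the pathwise sum of segment contributions. Both follow immediately from the fact that $\beta(r(\cdot))$ is predictable, right-continuous, and bounded (because $\mathbb{S}$ is finite), so it lies in the class of integrands for which the Itô integral reduces to the Riemann–Stieltjes pathwise construction on piecewise-constant trajectories.
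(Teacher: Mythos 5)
Your proposal is correct. Note, however, that the paper offers no proof of this lemma at all: it is stated as a quotation of Theorem 5.22 of Mao and Yuan's monograph, so there is no internal argument to compare against. What you have written is a valid self-contained substitute. Your first route (freezing the coefficients on the stochastic intervals $[\tau_k,\tau_{k+1})$, solving the resulting geometric Brownian motion, and patching by continuity of $z_n$ at the switching times) exactly mirrors the interlacing technique the paper itself uses in the proofs of Theorems \ref{theorem1} and \ref{thm3.1}, so it fits the style of the surrounding text; its only cost is the bookkeeping needed to collapse the segment sums into $\int_0^t[\alpha(r(s))-\tfrac{1}{2}\beta^2(r(s))]\,ds$ and $\int_0^t\beta(r(s))\,dB(s)$, which you handle correctly. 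Your second route, the direct It\^{o} verification of \eqref{eq12} against \eqref{eq9}, is shorter and is essentially how the cited reference proceeds; the computation $dz_n=z_n\,dY+\tfrac{1}{2}z_n\,d\langle Y\rangle$ checks out, and your closing remark that $\beta(r(\cdot))$ is bounded, predictable and piecewise constant (so the stochastic integral is well defined and agrees with the segmentwise construction) addresses the only point of measurability that needs saying. One could add a one-line remark that when $u_n^0=0$ both sides of \eqref{eq12} vanish identically, but this is immediate.
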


We can now state the explicit formula for the solution of Eq.
\eqref{eq1}.

\begin{theorem}\label{theorem3}
{\rm The unique  solution of Eq. \eqref{eq1} has the explicit form
\begin{equation}\label{eq14}
\begin{split}
u(t,x)=\sum\limits_{n=1}^{\infty}z_n(t)e_n(x)
=v(t,x)\exp\Big\{-\frac{1}{2}\int_0^t\beta^2(r(s))ds
+\int_0^t\beta(r(s))dB(s)\Big\},
\end{split}
\end{equation}
where $v(t,x)$ is the solution to \eqref{eq2} given explicitly by
\eqref{eq6}. }
\end{theorem}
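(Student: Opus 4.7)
The plan is to reduce the SPDE to a countable system of scalar hybrid SDEs by projecting onto the eigenbasis $\{e_n\}_{n\geq 1}$ of $-A$, solve each projection using the quoted Lemma, and then reassemble the series to recognize the factorization $u = v \cdot (\text{Dol\'eans exponential})$.

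First I would set $u_n(t) := \langle u(t), e_n\rangle$ for $n\geq 1$. Taking the inner product of the integral equation in Definition 2.1(ii) with $e_n$, and using the self-adjointness of $A$ together with $-Ae_n=\lambda_n e_n$ (valid because $u(s)\in\mathcal{D}(A)$ a.s.), I obtain
\begin{equation*}
u_n(t)=u_n^0+\int_0^t\bigl[-\lambda_n+\alpha(r(s))\bigr]u_n(s)\,ds+\int_0^t\beta(r(s))u_n(s)\,dB(s).
\end{equation*}
This is precisely the hybrid SDE \eqref{eq9} with initial condition $u_n(0)=u_n^0$, so by the Lemma its unique solution is $u_n(t)=z_n(t)$ given in \eqref{eq12}. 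Since $\{e_n\}$ is an orthonormal basis of $L^2(\mathcal{O})$ and $u(t)\in L^2(\mathcal{O})$ a.s., Parseval's identity gives the expansion $u(t,x)=\sum_{n\geq 1}z_n(t)e_n(x)$, which is the first equality of \eqref{eq14}.

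For the second equality, I would factor out of \eqref{eq12} the terms that do not depend on $n$, namely the Dol\'eans exponential
\begin{equation*}
\Phi(t):=\exp\Bigl\{-\tfrac{1}{2}\int_0^t\beta^2(r(s))\,ds+\int_0^t\beta(r(s))\,dB(s)\Bigr\},
\end{equation*}
leaving behind $\exp\{-\lambda_n t+\int_0^t\alpha(r(s))\,ds\}u_n^0$. The resulting series is exactly the expression \eqref{eq6} for $v(t,x)$ from Theorem \ref{thm3.1}, and pulling $\Phi(t)$ out of the sum yields $u(t,x)=v(t,x)\Phi(t)$. Uniqueness of the solution (Theorem \ref{theorem1}) then guarantees that \eqref{eq14} is \emph{the} solution.

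The main delicate point is the interchange of the functional $\langle\cdot,e_n\rangle$ with the deterministic and stochastic integrals in Definition 2.1(ii), and the convergence of the series $\sum_n z_n(t)e_n(x)$ in $L^2(\mathcal{O})$. The former is standard: $\langle\cdot,e_n\rangle$ is a bounded linear functional on $L^2(\mathcal{O})$, so it commutes with Bochner integrals and with It\^o integrals via the stochastic Fubini theorem (with integrability ensured by Theorem \ref{theorem1}, which gives $\E\|u(t)\|^p<\infty$). The series convergence is also immediate once one notes that $\Phi(t)$ is pathwise bounded on $[0,T]$ and that $\sum_n|u_n^0|^2=\|u^0\|^2<\infty$, so the dominated series $\sum_n\exp\{-2\lambda_n t+2\int_0^t\alpha(r(s))\,ds\}|u_n^0|^2$ converges a.s.\ thanks to $\lambda_n\uparrow\infty$. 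Everything else is the bookkeeping already carried out in the proof of Theorem \ref{thm3.1}.
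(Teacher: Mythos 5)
Your argument is correct, but it runs in the opposite direction from the paper's. The paper starts from the candidate series $\bar{u}(t,x):=\sum_{n\ge 1}z_n(t)e_n(x)$ built from the explicit SDE solutions \eqref{eq12}, substitutes the integral form of \eqref{eq9} term by term, interchanges the sum with the Lebesgue and It\^o integrals to verify that $\bar{u}$ satisfies the integral equation of Definition 2.1(ii), and then invokes uniqueness of the solution to the \emph{SPDE} (Theorem \ref{theorem1}) to conclude $u=\bar{u}$. You instead project the already-existing solution $u$ onto each $e_n$, use the self-adjointness of $A$ to identify the resulting scalar equation with \eqref{eq9}, and invoke uniqueness of the solution to each scalar hybrid \emph{SDE} to get $u_n=z_n$, reassembling by Parseval. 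The decomposition and the key lemma are the same; what differs is which uniqueness statement does the work. Your route has the advantage that the series $\sum_n z_n(t)e_n(x)$ is automatically the Fourier expansion of an element of $L^2(\mathcal{O})$, so you need not separately justify its convergence or that it lands in $\mathcal{D}(A)$ (points the paper passes over somewhat informally when it writes $A\bar{u}=-\sum_n\lambda_n z_n e_n$ and swaps $\sum_n$ with $\int_0^t\cdot\,dB(s)$); the price is the commutation of $\langle\cdot,e_n\rangle$ with the Bochner and It\^o integrals, which you correctly flag and which is indeed standard for a bounded linear functional. Both arguments then conclude identically by factoring the $n$-independent Dol\'eans exponential out of \eqref{eq12} to recover $v(t,x)$ from \eqref{eq6}.
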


\begin{proof}
Set
\begin{equation}\label{eq11}
\bar{u}(t,x):=\sum\limits_{n=1}^{\infty}z_n(t)e_n(x).
\end{equation}
and note from \eqref{eq00} that
\begin{equation*}
A\bar{u}(t,x)=-\sum\limits_{n=1}^{\infty}\lambda_nz_n(t)e_n(x).
\end{equation*}
Substituting \eqref{eq9} into \eqref{eq11}, we then compute
\begin{equation*}
\begin{split}
\bar{u}(t,x)&=\sum\limits_{n=1}^{\infty}\Big\{u^0_ne_n(x)+\int_0^t(-\lambda_n+\alpha(r(s)))e_n(x)z_n(s)ds\\
&+\int_0^t\beta(r(s))e_n(x)z_n(s)dB(s)\Big\}\\
&=\sum\limits_{n=1}^{\infty}u^0_ne_n(x)-\int_0^t\sum\limits_{n=1}^{\infty}\lambda_ne_n(x)z_n(s)ds+\int_0^t\alpha(r(s))\sum\limits_{n=1}^{\infty}e_n(x)z_n(s)ds\\
&+\int_0^t\beta(r(s))\sum\limits_{n=1}^{\infty}e_n(x)z_n(s)dB(s)\\
&=u^0(x)+\int_0^tA\bar{u}(s,x)ds+\int_0^t\alpha(r(s))\bar{u}(s,x)ds
+\int_0^t\beta(r(s))\bar{u}(,x)dB(s).
\end{split}
\end{equation*}
Consequently, $\bar u$ is a  solution of Eq. \eqref{eq1}. While, by
the uniqueness of  solution of Eq. \eqref{eq1}, we must have
\begin{equation*}
u(t,x)=\sum\limits_{n=1}^{\infty}z_n(t)e_n(x).
\end{equation*}
Taking into account \eqref{eq12}, we obtain the desired explicit
form \eqref{eq14}.
\end{proof}

\section{Sample Lyapunov Exponents}

Making use of the explicit solution of Eq. \eqref{eq1}, we can now
discuss its Lyapunov exponent. Let us now begin with  the sample
Lyapunov exponent.

\begin{theorem}\label{theorem4}
 {\rm The  solution $\{u(t)\}_{t\ge 0}$ of Eq. \eqref{eq1} has the following properties:
 \begin{enumerate}
\item[\textmd{(1)}] If
$u^0$ is deterministic and $u^0\not= 0$, then
\begin{equation*}
\lim\limits_{t\rightarrow\infty}\frac{1}{t}\log(\|u(t)\|)=-\Big(\lambda_{n_0}-\sum\limits_{j=1}^{N}\pi_j\Big(\alpha_j-\frac{1}{2}\beta_j^2\Big)
\Big) \quad \mbox{a.s.}
\end{equation*}
In particular, the solution of Eq. \eqref{eq1} with initial data
$u^0$  will converge exponentially to zero  with probability one if
and only if
\begin{equation*}
\lambda_{n_0}>\sum\limits_{j=1}^{N}\pi_j\Big(\alpha_j-\frac{1}{2}\beta_j^2\Big).
\end{equation*}
\item[\textmd{(2)}]For any initial data $u^0$,
\begin{equation*}
\limsup\limits_{t\rightarrow\infty}\frac{1}{t}\log(\|u(t)\|)\leq-\Big(\lambda_1-\sum\limits_{j=1}^{N}\pi_j\Big(\alpha_j-\frac{1}{2}\beta_j^2\Big)
\Big) \quad\mbox{ a.s.}
\end{equation*}
In particular, the solution  of Eq. \eqref{eq1} is almost surely
exponentially stable if
\begin{equation*}
\lambda_1>\sum\limits_{j=1}^{N}\pi_j\Big(\alpha_j-\frac{1}{2}\beta_j^2\Big).
\end{equation*}
 \end{enumerate}
}
\end{theorem}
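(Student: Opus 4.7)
The plan is to leverage the explicit representation from Theorem \ref{theorem3} to reduce everything to the deterministic-forcing case (Theorem \ref{theorem2}) plus a careful analysis of the exponential martingale factor. By (\ref{eq14}),
\begin{equation*}
\|u(t)\|=\|v(t)\|\exp\Big\{-\tfrac{1}{2}\int_0^t\beta^2(r(s))ds+\int_0^t\beta(r(s))dB(s)\Big\},
\end{equation*}
so taking logarithms and dividing by $t$ gives the decomposition
\begin{equation*}
\frac{1}{t}\log(\|u(t)\|)=\frac{1}{t}\log(\|v(t)\|)-\frac{1}{2t}\int_0^t\beta^2(r(s))ds+\frac{M(t)}{t},
\end{equation*}
where $M(t):=\int_0^t\beta(r(s))dB(s)$ is a continuous martingale.

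The first step is to invoke Theorem \ref{theorem2}: under the assumptions of case (1) the first term converges a.s. to $-(\lambda_{n_0}-\sum_j\pi_j\alpha_j)$, while under case (2) it satisfies the corresponding limsup bound with $\lambda_{n_0}$ replaced by $\lambda_1$. The second step is to handle the time averages involving $r(\cdot)$: since $r$ is an irreducible finite-state chain with stationary distribution $\pi$, the ergodic theorem yields
\begin{equation*}
\lim_{t\to\infty}\frac{1}{t}\int_0^t\beta^2(r(s))ds=\sum_{j=1}^{N}\pi_j\beta_j^2\quad\text{a.s.}
\end{equation*}
The third step is to show $M(t)/t\to 0$ a.s. Because $\beta$ is bounded on $\mathbb{S}$, the quadratic variation $\langle M\rangle_t=\int_0^t\beta^2(r(s))ds$ grows at most linearly. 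If all $\beta_j=0$ then $M\equiv 0$ trivially; otherwise $\langle M\rangle_t\to\infty$ a.s., and the strong law of large numbers for continuous local martingales (for instance via Dambis--Dubins--Schwarz, writing $M(t)=\tilde B(\langle M\rangle_t)$ for a Brownian motion $\tilde B$ and using $\tilde B(s)/s\to 0$) gives $M(t)/\langle M\rangle_t\to 0$ a.s., which combined with $\langle M\rangle_t/t\to\sum_j\pi_j\beta_j^2<\infty$ implies $M(t)/t\to 0$ a.s.

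Combining the three limits yields, in case (1),
\begin{equation*}
\lim_{t\to\infty}\frac{1}{t}\log(\|u(t)\|)=-\lambda_{n_0}+\sum_{j=1}^N\pi_j\alpha_j-\tfrac{1}{2}\sum_{j=1}^N\pi_j\beta_j^2=-\Big(\lambda_{n_0}-\sum_{j=1}^N\pi_j(\alpha_j-\tfrac{1}{2}\beta_j^2)\Big),
\end{equation*}
exactly as required, and similarly the limsup inequality in case (2) follows from the limsup in (\ref{eq13}) combined with the two a.s. limits above. The exponential stability conclusions are immediate from the sign of the right-hand side. The main obstacle is really the martingale piece: one must justify $M(t)/t\to 0$ without direct appeal to Brownian SLLN, since $\beta(r(\cdot))$ is only piecewise constant and random; the Dambis--Dubins--Schwarz time-change (together with the linear growth of $\langle M\rangle_t$ guaranteed by boundedness of $\beta$ on the finite state space) handles this cleanly, and the rest is bookkeeping of ergodic averages.
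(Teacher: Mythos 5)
Your proposal is correct and follows essentially the same route as the paper: decompose $\frac{1}{t}\log\|u(t)\|$ via the explicit formula \eqref{eq14}, apply Theorem \ref{theorem2} to the $\|v(t)\|$ term, the ergodic theorem for the Markov chain to the $\int_0^t\beta^2(r(s))ds$ term, and the strong law of large numbers for continuous local martingales to show $\frac{1}{t}\int_0^t\beta(r(s))dB(s)\to 0$ a.s. The paper simply cites \cite[Theorem 1.6, p16]{my06} for the martingale step, whereas you spell out the underlying Dambis--Dubins--Schwarz argument; this is a harmless elaboration, not a different method.
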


\begin{proof}
 It follows from
\eqref{eq14}  that
\begin{equation*}
\begin{split}
\frac{1}{t}\log(\|u(t)\|)&=\frac{1}{t}\Big\{\log(\|v(t)\|)-\frac{1}{2}\int_0^t\beta^2(r(s))ds
+\int_0^t\beta(r(s))dB(s)\Big\}.
\end{split}
\end{equation*}
 By the strong law of large numbers, e.g., \cite[Theorem 1.6,
p16]{my06}
\begin{equation*}
\lim\limits_{t\rightarrow\infty}\frac{1}{t}\int_0^t\beta(r(s))dB(s)=0,
\end{equation*}
while by the ergodic property of Markov chains,
\begin{equation*}
-\frac{1}{2}\lim\limits_{t\rightarrow\infty}\frac{1}{t}\int_0^t\beta^2(r(s))ds=-\frac{1}{2}\sum\limits_{j=1}^N\pi_j\beta_j^2.
\end{equation*}
The desired assertions then follow  from Theorem
\ref{theorem2}.
\end{proof}

In the sequel we set an example to demonstrate our theories and
reveal that an unstable stochastic system can be stabilized by
Markovian switching.

\begin{example}\label{example}
{\rm Let $B(t)$ be a scalar Brownian motion and $r(t)$ a
right-continuous Markov chain taking values in $\mathbb{S}=\{1,2\}$
with the generator $\Gamma=(\gamma_{ij})_{2\times2}$:
\begin{equation*}
-\gamma_{11}=\gamma_{12}>0,\ \ \ \ \ -\gamma_{22}=\gamma_{21}>0.
\end{equation*}
It is  straightforward to show that the unique stationary probability
distribution of the Markov chain $r(t)$ is
\begin{equation*}
\pi=(\pi_1,\pi_2)=\left(\frac{\gamma_{22}}{\gamma_{11}+\gamma_{22}},\frac{\gamma_{11}}{\gamma_{11}+\gamma_{22}}\right).
\end{equation*}
Consider stochastic heat equation with Markovian switching:
\begin{equation}\label{eq15}
\begin{cases}
\frac{\partial u(t,x)}{\partial t}&=A
u(t,x)+\alpha(r(t))u(t,x)+\beta(r(t))u(t,x)\dot{B}(t),\
t>0, x\in(0,\pi);\\
u(t,0)&=u(t,\pi)=0, t>0; \ \ \ \ \ \ \ \ \ u(0,x)=\sqrt{2/\pi}\sin
x,\ \ \ \ x\in(0,\pi),
\end{cases}
\end{equation}
where $\alpha(1)=a, \alpha(2)=b, \beta(1)=c, \beta(2)=d$ with
$a,b,c,d\in\mathbb{R}$. Moreover, we recall that
$e_n(x)=\sqrt{2/\pi}\sin nx, n=1,2,3,\cdots,$ are eigenfunctions of
$-A$, with positive and increasing eigenvalues $\lambda_n=n^2$, and
$e_n\in\mathcal {D}(A)$. Note that initial condition
$u(0,x)=\sqrt{2/\pi}\sin x$ is deterministic and $u_1^0=\langle
\sqrt{2/\pi}\sin x,\sqrt{2/\pi}\sin x\rangle=1$, which implies
$n_0=1$. By Theorem \ref{theorem4}, the unique solution $u(t,x)$
converges exponentially to zero  if and only if
\begin{equation*}
1-(\pi_1\alpha_1+\pi_2\alpha_2)+\frac{1}{2}(\pi_1\beta_1^2+\pi_2\beta_2^2)>0.
\end{equation*}
That is,
\begin{equation}\label{eq17}
\frac{a\gamma_{22}+b\gamma_{11}}{\gamma_{11}+\gamma_{22}}<1+\frac{c^2\gamma_{22}+d^2\gamma_{11}}{2(\gamma_{11}+\gamma_{22})}.
\end{equation}
As a speical case, let us set $a=2, b=1, c=1, d=1$ and
\begin{equation*}
-\gamma_{11}=\gamma_{12}=4,\ \ \ \ \
-\gamma_{22}=\gamma_{21}=\gamma>0.
\end{equation*}
Then the stochastic system \eqref{eq15} can be regarded as the result of
two equations
\begin{equation}\label{eq16}
\frac{\partial u(t,x)}{\partial t}=A
u(t,x)+2u(t,x)+u(t,x)\dot{B}(t),\ t\geq0, x\in(0,\pi)
\end{equation}
and
\begin{equation}\label{eq0}
\frac{\partial u(t,x)}{\partial t}=A
u(t,x)+u(t,x)+u(t,x)\dot{B}(t),\ t\geq0, x\in(0,\pi),
\end{equation}
with the  Dirichlet boundary condition and initial condition,
switching from one to the other according to the law of Markov
chain. By Theorem \ref{theorem4}, the  solution to Eq.\eqref{eq16}
has the property
\begin{equation*}
\lim\limits_{t\rightarrow\infty}\frac{1}{t}\log(\|u(t)\|)=\frac{1}{2},
\end{equation*}
and the solution to Eq.\eqref{eq0}  has the property
\begin{equation*}
\lim\limits_{t\rightarrow\infty}\frac{1}{t}\log(\|u(t)\|)=-\frac{1}{2}.
\end{equation*}
That is, the solution of  stochastic system \eqref{eq16} explodes
exponentially, and the solution of stochastic system \eqref{eq0}
converges exponentially to zero. However, by \eqref{eq17},  the
unique  solution $u(t,x)$ of Eq.\eqref{eq15} converges exponentially
to zero if and only if $0<\gamma<4$.  This shows once again that
Markovian switching plays a key role in the stability of hybrid
stochastic heat equations.}
\end{example}

\section{$p$th Moment Lyapunov Exponents}

Let us now turn to the discussion of the $p$th moment Lyapunov exponent.
The
 $p$th moment exponential stability of stochastic heat equations
without Markov switching has been discussed, e.g., in \cite{clm01,
k99, k02, x08}.   However, due to  Markov switching, it is much more
complicated to study the moment Lyapunov exponent of hybrid
stochastic heat equations.  To cope with the difficulties arising
from Markov switching, large deviation techniques will be used in
this section.

In what follows, we recall some details with respect to large
deviation, see, e.g., Donsker and Varadhan \cite{dv75}. Let
$(X,\mathscr{B},\|\cdot\|_X)$ be a Polish space, and $p(t,x,dy)$ the
transition probability of an $X$-valued Markov process $Z(t)$ with
$Z(0)=x$. Let $P_t$ be a strongly continuous Markovian semigroup
associated with $Z(t)$, and define $P_tf(x):=\int_Xf(y)p(t,x,dy)$
for $f\in C(X)$, the space of continuous functions on $X$. Let $L$
be the infinitesimal generator of the semigroup $P_t$ with domain
$\mathscr{D}(L)$. Let $\mathscr{M}$ be the space of all probability
measures on $X$. For any $\mu\in\mathscr{M}$, define the rate
function by
\begin{equation}\label{eq03}
I(\mu):=-\inf_{u>0,u\in\mathscr{D}}\int_X\left(\frac{Lu}{u}(x)\right)\mu(dx).
\end{equation}
Let $\Omega_x$ be the space of $X$-valued c\`{a}dl\`{a}g functions
$Z(t), 0\leq t<\infty$, with $Z(0)=x$.  For each
$t>0,\omega\in\Omega_x$, and Borel set $A\subset \mathscr{B}$, the
occupation time measure is defined by
\begin{equation}\label{eq01}
L_t(\omega,A):=\frac{1}{t}\int_0^tI_A(Z(s))ds.
\end{equation}
In other words, $L(\omega,A)$ is the proportion of time up to $t$
that a particular sample $\omega=Z(\cdot)$ spends in the set $A$.
Note that for each $t>0$ and each $\omega$, $L(\omega,\cdot)$ is a
probability measure on $X$.

\begin{remark}
{\rm For the Markov chain $r(t)$, we remark that
\begin{equation*}
X=\mathbb{S} \mbox{ and }
L_t(\omega,i)=\frac{1}{t}\int_0^tI_{\{i\}}(r(s))ds, \ \
i\in\mathbb{S}.
\end{equation*}

}
\end{remark}
\begin{remark}
{\rm For a continuous time Markov chain with finite state space and
$Q$-matrices $(q_{ij})_{N\times N}$, it is easy to see that the rate
function has the following expression (see\cite{gq88})
\begin{equation}\label{mrate}
I(\mu)=-\inf\limits_{u_i>0}\sum\limits_{i,j=1}^N\frac{\mu_iq_{ij}u_j}{u_i},
\end{equation}
where $\mu$ is the probability measure on the state space
$\{1,2,\cdots, N\}$.}
\end{remark}

\begin{lemma}\label{large deviation}{\rm  \cite[Theorem 4]{dv75}
If $\Phi$ is a real-valued weakly continuous functional on
$\mathscr{M}$, then
\begin{equation*}
\lim\limits_{t\rightarrow\infty}\frac{1}{t}\log(E\{\exp\{t\Phi(L_t(\omega,\cdot))\}\})=\sup_{\mu\in\mathscr{M}}[\Phi(\mu)-I(\mu)]
\end{equation*}
where $L_t(\omega,\cdot)$ and $I(\mu)$ is defined by \eqref{eq01}
and \eqref{mrate}, respectively.}
\end{lemma}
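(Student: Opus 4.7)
The plan is to combine the Donsker--Varadhan level-2 large deviation principle (LDP) for the occupation measures of a finite-state Markov chain with Varadhan's integral lemma. Since $r(t)$ takes values in the finite set $\mathbb{S}$, the space $\mathscr{M}$ of probability measures on $\mathbb{S}$ is the compact $(N-1)$-dimensional simplex, which keeps all technicalities at a minimum and, in particular, forces any weakly continuous $\Phi$ to be bounded.

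First, I would establish that $\{L_t\}_{t>0}$ satisfies the LDP on $\mathscr{M}$ with good rate function $I(\mu)$ given by \eqref{eq03}: for every closed $C\subset\mathscr{M}$ and every open $O\subset\mathscr{M}$,
\begin{equation*}
\limsup_{t\to\infty}\frac{1}{t}\log\mathbb{P}(L_t\in C)\leq -\inf_{\mu\in C}I(\mu),\qquad
\liminf_{t\to\infty}\frac{1}{t}\log\mathbb{P}(L_t\in O)\geq -\inf_{\mu\in O}I(\mu).
\end{equation*}
For an irreducible, finite-state chain this is classical. The infinitesimal generator $L$ of $r(\cdot)$ acts on $u\in\mathbb{R}_+^N$ by $(Lu)_i=\sum_{j=1}^N\gamma_{ij}u_j$, and a direct finite-dimensional computation shows that the abstract variational formula \eqref{eq03} reduces to the explicit expression \eqref{mrate}.

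Second, I would apply Varadhan's integral lemma to the LDP just obtained. Because $\mathscr{M}$ is compact and $\Phi$ is weakly continuous, $\Phi$ is bounded and uniformly continuous, so the moment/tail condition in Varadhan's lemma is automatic. The upper bound follows by partitioning $\mathscr{M}$ into finitely many small open sets on which $\Phi$ varies by at most $\varepsilon$, applying the Laplace estimate on each piece, and letting $\varepsilon\downarrow 0$; the lower bound follows from restricting the expectation to a small open neighbourhood of a near-maximizer $\mu^{\ast}$ of $\Phi-I$. Together they yield
\begin{equation*}
\lim_{t\to\infty}\frac{1}{t}\log E\bigl[\exp\{t\Phi(L_t(\omega,\cdot))\}\bigr]=\sup_{\mu\in\mathscr{M}}[\Phi(\mu)-I(\mu)].
\end{equation*}

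The main obstacle would normally be establishing the LDP with the explicit rate function \eqref{mrate}, since the general Donsker--Varadhan construction involves a nontrivial variational reduction. Here, however, the statement is quoted verbatim from \cite[Theorem 4]{dv75}, so the proof reduces to verifying that the present setting (finite $\mathbb{S}$, irreducible $\Gamma$, weakly continuous $\Phi$ on a compact simplex) fits the Donsker--Varadhan hypotheses, after which Varadhan's lemma delivers the conclusion with no further subtlety.
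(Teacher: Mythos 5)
Your proposal is correct, but note that the paper itself offers no proof of this lemma: it is stated purely as a quotation of \cite[Theorem 4]{dv75}, so there is no internal argument to compare against. What you supply is the standard derivation that lies behind that citation --- the level-2 Donsker--Varadhan LDP for the occupation measures $L_t$ of the irreducible finite-state chain, followed by Varadhan's integral lemma, with compactness of the simplex $\mathscr{M}$ disposing of the boundedness and tail conditions. This is sound, and you correctly isolate the one nontrivial bookkeeping step, namely that the abstract variational rate function \eqref{eq03} specializes to the explicit finite-dimensional expression \eqref{mrate} when the generator is the $Q$-matrix $\Gamma$ acting by $(Lu)_i=\sum_j\gamma_{ij}u_j$; the paper delegates exactly this reduction to \cite{gq88} in the remark preceding the lemma rather than carrying it out. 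The only caveats worth recording are (i) Donsker--Varadhan's Theorem 4 is stated under regularity hypotheses on the transition semigroup which must be (and trivially are) verified for a continuous-time irreducible chain on a finite state space, and (ii) the paper's lemma asserts a genuine limit rather than separate upper and lower bounds, which is precisely what the matching Varadhan upper and lower estimates in your second step deliver; your sketch covers both points. In short: your route proves what the paper merely cites, and the two are consistent.
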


\begin{theorem}\label{theorem5}
{\rm Let $p > 0$. The solution to Eq. \eqref{eq1} has the following
properties:
\begin{enumerate}
\item[\textmd{(1)}]  If $u^0$ is deterministic and $u^0\not= 0$, then
\begin{equation}\label{eq22}
\lim\limits_{t\rightarrow\infty}\frac{1}{t}\log(
\mathbb{E}(\|u(t)\|^p))=-p\lambda_{n_0}
+\sup\limits_{\mu}\Big\{\sum\limits_{i=1}^N g(i)\mu(i)-I(\mu)\Big\},
\end{equation}
where the supremum is taken over probability measures $\mu$ on
$\mathbb{S}$, $g(i)=p\alpha_i+\frac{p(p-1)}{2}\beta_i^2,
i=1,2,\cdots,N$ and $I(\mu)$ is defined by \eqref{mrate}. In
particular, the $p$th moment of  the solution of Eq. \eqref{eq1}
with initial data $u^0$ will converge exponentially with probability
one to zero if and only if
\begin{equation*}
p\lambda_{n_0} -\sup\Big\{\sum\limits_{i=1}^N
g(i)\mu(i)-I(\mu)\Big\}>0.
\end{equation*}
\item[\textmd{(2)}] For any $u^0$ (which is independent
of $r(\cdot)$ and $B(\cdot)$ as assumed throughout this paper),
\begin{equation}\label{eq23}
\lim\sup\limits_{t\rightarrow\infty}\frac{1}{t}\log(
\mathbb{E}(\|u(t)\|^p))\leq-p\lambda_1
+\sup\Big\{\sum\limits_{i=1}^N g(i)\mu(i)-I(\mu)\Big\}.
\end{equation}
In particular, the solution  of Eq. \eqref{eq1} is exponentially
stable in the $p$th moment if
\begin{equation*}
-p\lambda_1 +\sup\Big\{\sum\limits_{i=1}^N
g(i)\mu(i)-I(\mu)\Big\}<0.
\end{equation*}
\end{enumerate}
}
\end{theorem}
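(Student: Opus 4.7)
The plan is to derive an explicit scalar formula for $\E\|u(t)\|^p$, split it into two factors---an eigen-series that decays deterministically in $t$ and an exponential functional of the Markov chain occupation measure---and then invoke the Donsker--Varadhan lemma (Lemma \ref{large deviation}). Starting from the representation in Theorem \ref{theorem3} and Parseval's identity, I would compute
\begin{equation*}
\|u(t)\|^p = \exp\!\left\{p\!\int_0^t\!\alpha(r(s))ds - \tfrac{p}{2}\!\int_0^t\!\beta^2(r(s))ds + p\!\int_0^t\!\beta(r(s))dB(s)\right\}\Big(\sum_{n=1}^\infty e^{-2\lambda_n t}(u_n^0)^2\Big)^{p/2}.
\end{equation*}
Conditioning on the $\sigma$-algebra generated by $(r(\cdot),u^0)$, which is independent of $B$, the stochastic integral $p\int_0^t\beta(r(s))dB(s)$ is centered Gaussian with variance $p^2\int_0^t\beta^2(r(s))ds$, so its exponential moment equals $\exp(\tfrac{p^2}{2}\int_0^t\beta^2(r(s))ds)$. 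Substituting and recalling $g(i)=p\alpha_i+\tfrac{p(p-1)}{2}\beta_i^2$ yields
\begin{equation*}
\E\|u(t)\|^p = \E\!\left[\exp\!\left(\int_0^t g(r(s))ds\right)\Big(\sum_{n=1}^\infty e^{-2\lambda_n t}(u_n^0)^2\Big)^{p/2}\right].
\end{equation*}

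Next, by \eqref{eq01}, $\int_0^t g(r(s))ds = t\sum_{i=1}^N g(i)L_t(\omega,i) = t\Phi(L_t)$ with $\Phi(\mu):=\sum_{i=1}^N g(i)\mu(i)$, a linear (hence weakly continuous) functional on probability measures over $\mathbb{S}$. For part (1) with deterministic $u^0$ the eigen-series factors out of $\E$ and the sandwich
\begin{equation*}
|u_{n_0}^0|^p\,e^{-p\lambda_{n_0}t} \le \Big(\sum_{n\ge n_0} e^{-2\lambda_n t}(u_n^0)^2\Big)^{p/2} \le \|u^0\|^p\,e^{-p\lambda_{n_0}t}
\end{equation*}
pins its $(1/t)\log$ limit at $-p\lambda_{n_0}$; applying Lemma \ref{large deviation} to $\E\exp(t\Phi(L_t))$ then yields \eqref{eq22}. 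For part (2), the uniform bound $\sum_{n\ge 1} e^{-2\lambda_n t}(u_n^0)^2\le e^{-2\lambda_1 t}\|u^0\|^2$ together with the independence of $u^0$ from $(r,B)$ gives $\E\|u(t)\|^p\le e^{-p\lambda_1 t}\E\|u^0\|^p\cdot \E\exp(t\Phi(L_t))$, and Lemma \ref{large deviation} delivers \eqref{eq23} after using $\E\|u^0\|^p<\infty$.

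The main obstacle is the clean application of the Donsker--Varadhan lemma: one must check that $\Phi$ satisfies its hypothesis, which is immediate here because linearity and finiteness of $\mathbb{S}$ force weak continuity, and adopt the finite-state rate function \eqref{mrate} in place of the general expression \eqref{eq03}. A secondary subtlety is the asymmetry between the two cases: in part (1) the deterministic $u^0$ allows $(u_{n_0}^0)^2 e^{-2\lambda_{n_0}t}$ to bracket the series both above and below, yielding an equality, whereas in part (2) only the uniform upper bound via $\lambda_1$ is available---and the independence of $u^0$ from $(r,B)$ is crucial to factor out $\E\|u^0\|^p$---so only an inequality results.
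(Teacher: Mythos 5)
Your proposal is correct and follows essentially the same route as the paper: the explicit solution from Theorem \ref{theorem3}, the reduction of $\E\|u(t)\|^p$ to $\E\bigl(\|v(t)\|^p\exp\{\frac{p(p-1)}{2}\int_0^t\beta^2(r(s))ds\}\bigr)$, the $\lambda_{n_0}$/$\lambda_1$ sandwich on the eigen-series, and the Donsker--Varadhan lemma applied to the linear functional $\Phi(\mu)=\sum_i g(i)\mu(i)$ of the occupation measure. The only difference is in how the stochastic-integral factor is removed: you condition once on the whole path of $r(\cdot)$ and use conditional Gaussianity of $\int_0^t\beta(r(s))dB(s)$, whereas the paper reaches the same identity by iterated conditioning over the jump times $\tau_k$ and the exponential-martingale property of each $\zeta_j$ --- your shortcut is legitimate precisely because $r$ and $B$ are independent and $\beta(r(\cdot))$ is piecewise constant, which is what the paper's longer computation verifies.
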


\begin{proof}
Recall that the solution of
Eq. \eqref{eq1} has the explicit form
\begin{equation}
u(t,x)=v(t,x)\exp\Big\{-\frac{1}{2}\int_0^t\beta^2(r(s))ds+\int_0^t\beta(r(s))dB(s)\Big\},
\end{equation}
where
\begin{equation*}
v(t,x)=\sum\limits_{n=1}^{\infty}\exp\Big\{-\lambda_nt+\int_0^t\alpha(r(s))ds
\Big\}u^0_ne_n(x).
\end{equation*}
It is well known that almost every sample path of the Markov chain
$r(\cdot)$ is a right continuous  step function with a finite number
of sample jumps in any finite subinterval of
$\mathbb{R}_+:=[0,\infty)$. Hence there is a sequence of finite
stopping times $0=\tau_0<\tau_1<\cdots<\tau_k\uparrow\infty$ such
that $ r(t)=\sum_{k=0}^{\infty}r(\tau_k)I_{[\tau_k,\tau_{k+1})}(t),
t\geq0. $ For any integer $k>0$ and $t\geq0$, we compute
\begin{equation*}
\begin{split}
\|u(t\wedge\tau_k)\|^p&=\|v(t\wedge\tau_k)\|^p\exp\Big\{-\frac{p}{2}\int_0^{t\wedge\tau_k}\beta^2(r(s))ds+\int_0^{t\wedge\tau_k}p\beta(r(s))dB(s)\Big\}\\
&=\|v(t\wedge\tau_k)\|^p\xi(t\wedge\tau_k)\exp\Big\{-\frac{p^2}{2}\int_0^{t\wedge\tau_k}\beta^2(r(s))ds+\int_0^{t\wedge\tau_k}p\beta(r(s))dB(s)\Big\}\\
&=\|v(t\wedge\tau_k)\|^p\xi(t\wedge\tau_k)\prod\limits_{j=0}^{k-1}\zeta_j(t),
\end{split}
\end{equation*}
where
\begin{equation*}
\xi(t)=\exp\Big\{\frac{p(p-1)}{2}\int_0^{t}\beta^2(r(s))ds\Big\}
\end{equation*}
and
\begin{equation*}
\zeta_j(t)=\exp\Big\{-\frac{1}{2}p^2\beta^2(r(t\wedge\tau_j))(t\wedge\tau_{j+1}-t\wedge\tau_j)+p\beta(r(t\wedge\tau_j))[B(t\wedge\tau_{j+1})
-B(t\wedge\tau_j)]\Big\}.
\end{equation*}
Letting $\mathcal {G}_t=\sigma(\{r(u)\}_{u\geq0},\{B(s)\}_{0\leq
s\leq t})$, by properties of conditional expectations, we have
\begin{equation*}
\begin{split}
\mathbb{E}(\|u(t\wedge\tau_k)\|^p)&=\mathbb{E}\Big(\|v(t\wedge\tau_k)\|^p\xi(t\wedge\tau_k)\prod\limits_{j=0}^{k-1}\zeta_j(t)\Big)\\
&=\mathbb{E}\Big\{\mathbb{E}\Big(\|v(t\wedge\tau_k)\|^p\xi(t\wedge\tau_k)\prod\limits_{j=0}^{k-1}\zeta_j(t)\Big|\mathcal
{G}_{t\wedge\tau_{k-1}}\Big)\Big\}\\
&=\mathbb{E}\Big\{\Big[\|v(t\wedge\tau_k)\|^p\xi(t\wedge\tau_k)\prod\limits_{j=0}^{k-2}\zeta_j(t)\Big]\mathbb{E}\Big(\zeta_{k-1}(t)\Big|\mathcal
{G}_{t\wedge\tau_{k-1}}\Big)\Big\}.
\end{split}
\end{equation*}
Clearly,
\begin{equation*}
\begin{split}
\mathbb{E}(\zeta_{k-1}(t)|\mathcal
{G}_{t\wedge\tau_{k-1}})&=\mathbb{E}\Big(\sum\limits_{i\in\mathbb{S}}I_{\{r(t\wedge\tau_{k-1})=i\}}\zeta_{k-1}^i(t)\Big|\mathcal
{G}_{t\wedge\tau_{k-1}}\Big)\\
&=\sum\limits_{i\in\mathbb{S}}I_{\{r(t\wedge\tau_{k-1})=i\}}\mathbb{E}(\zeta_{k-1}^i(t)|\mathcal
{G}_{t\wedge\tau_{k-1}}),
\end{split}
\end{equation*}
where
\begin{equation*}
\zeta_{k-1}^i(t)=\exp\Big\{-\frac{1}{2}p^2\beta^2_i(t\wedge\tau_{k}-t\wedge\tau_{k-1})+p\beta_i[B(t\wedge\tau_{k})
-B(t\wedge\tau_{k-1})]\Big\},\ \ \ i\in\mathbb{S}.
\end{equation*}
Noting that $t\wedge\tau_k-t\wedge\tau_{k-1}$ is $\mathcal
{G}_{t\wedge\tau_{k-1}}$-measurable and $B(t_2)-B(t_1)$ is
independent of $\mathcal {G}_{t\wedge\tau_{k-1}}$ whenever $t_2\geq
t_1\geq t\wedge\tau_{k-1}$, by \cite[Lemma 3.2, p104]{my06}, we can
derive that
\begin{equation*}
\begin{split}
& \mathbb{E}(\zeta_{k-1}^i(t)|\mathcal
{G}_{t\wedge\tau_{k-1}}) \\
&=\exp\Big\{-\frac{1}{2}p^2\beta^2_i(t\wedge\tau_k-t\wedge\tau_{k-1})\Big\}
\mathbb{E}\exp(p\beta_i[B(t\wedge\tau_k)
-B(t\wedge\tau_{k-1})]|\mathcal
{G}_{t\wedge\tau_{k-1}})\\
&=\exp\Big\{-\frac{1}{2}p^2\beta^2_i(t\wedge\tau_k-t\wedge\tau_{k-1})\Big\}
\mathbb{E}\exp(p\beta_i[B(t_2) -B(t_1)])|_{t_2=t\wedge\tau_k,t_1=t\wedge\tau_{k-1}}\\
&=\exp\Big\{-\frac{1}{2}p^2\beta^2_i(t\wedge\tau_k-t\wedge\tau_{k-1})\Big\}
\exp\Big\{\frac{1}{2}p^2\beta^2_i(t\wedge\tau_k-t\wedge\tau_{k-1})\Big\}\\
&=1.
\end{split}
\end{equation*}
Hence,
\begin{equation*}
\mathbb{E}(\|u(t\wedge\tau_k)\|^p)=\mathbb{E}\Big(\|v(t\wedge\tau_k)\|^p\xi(t\wedge\tau_k)\prod\limits_{j=0}^{k-2}\zeta_j(t)\Big).
\end{equation*}
Repeating this procedure, we obtain that
\begin{equation*}
\mathbb{E}(\|u(t\wedge\tau_k)\|^p)=\mathbb{E}(\|v(t\wedge\tau_k)\|^p\xi(t\wedge\tau_k)).
\end{equation*}
Letting $k\rightarrow\infty$ gives
\begin{equation}\label{eq25}
\mathbb{E}(\|u(t)\|^p)=\mathbb{E}(\|v(t)\|^p\xi(t)).
\end{equation}

\noindent (1) The intial data $u^0$ is deterministic and
$u^0\not=0$. Using \eqref{eq6}, we compute
\begin{equation}\label{eq04}
\begin{split}
\mathbb{E}(\|v(t)\|^p\xi(t))&=\mathbb{E}\Big(\Big\|\sum\limits_{n=1}^{\infty}\exp\Big\{-\lambda_nt+\int_0^t\alpha(r(s))ds\Big\}
u^0_n e_n\Big\|^p\xi(t)\Big)\\
&=\mathbb{E}\Big(\Big\|\sum\limits_{n=n_0}^{\infty}\exp\Big\{-\lambda_nt+\int_0^t\alpha(r(s))ds\Big\}u^0_n e_n\Big\|^p\xi(t)\Big)\\
&=\mathbb{E}\Big(\Big(\sum\limits_{n=n_0}^{\infty}\Big|\exp\Big\{-\lambda_nt+\int_0^t\alpha(r(s))ds\Big\}u^0_n\Big|^2\Big)^{\frac{p}{2}}\xi(t)\Big)\\
&\leq\|u^0\|^p\mathbb{E}\Big(\exp\Big\{-p\lambda_{n_0}t+\int_0^tg(r(s))ds\Big\}\Big),
\end{split}
\end{equation}
and
\begin{equation*}
\mathbb{E}\Big(\|v(t)\|^p\xi(t)\Big)\geq|u_n^0|^p\mathbb{E}\Big(\exp\Big\{-p\lambda_{n_0}t+\int_0^tg(r(s))ds\Big\}\Big).
\end{equation*}
Therefore, by \eqref{eq25}
\begin{equation}\label{eq02}
\lim\limits_{t\rightarrow\infty}\frac{1}{t}\log(
\mathbb{E}(\|u(t)\|^p))=-p\lambda_{n_0}+\lim\limits_{t\rightarrow\infty}\frac{1}{t}\log\Big(\mathbb{E}\Big(\exp\Big\{\int_0^tg(r(s))ds\Big\}\Big)\Big).
\end{equation}
Furthermore, note that
\begin{equation*}
\begin{split}
\int_0^tg(r(s))ds&=\int_0^tg(r(s))1_{\{r(s)\in\mathbb{S}\}}ds\\
&=t\sum\limits_{i=1}^Ng(i)\frac{1}{t}\int_0^t1_{\{i\}}(r(s))ds\\
&=t\sum\limits_{i=1}^Ng(i)L_t(\omega,i).
\end{split}
\end{equation*}
Then  it follows from Lemma \ref{large deviation} that
\begin{equation*}
\lim\limits_{t\rightarrow\infty}\frac{1}{t}\log\Big(\mathbb{E}\Big(\exp\Big\{\int_0^tg(r(s))ds\Big\}\Big)\Big)=\sup\Big\{\sum\limits_{i=1}^N
g(i)\mu(i)-I(\mu)\Big\},
\end{equation*}
where the supremum is taken over probability measures $\mu$ on
$\mathbb{S}$ and $I(\mu)$ is defined by \eqref{eq03}.

\noindent (2) For any $u^0$ which is independent of $r(\cdot)$ and
$B(\cdot)$, we observe from \eqref{eq04} that
\begin{equation*}
\begin{split}
\mathbb{E}(\|v(t)\|^p\xi(t))&\leq\mathbb{E}\Big(\exp\Big\{-p\lambda_1t+\int_0^tg(r(s))ds\Big\}\|
u^0\|^p\Big)\\
&=\mathbb{E}\Big(\exp\Big\{-p\lambda_1t+\int_0^tg(r(s))ds\Big\}\Big)\mathbb{E}(\|
u^0\|^p).
\end{split}
\end{equation*}
Hence the desired assertion \eqref{eq23} follows from Lemma
\ref{large deviation}.
\end{proof}

\begin{remark}
{\rm Since the Markov chain $r(t)$ with $Q$-matrix
$(\gamma_{ij})_{N\times N}, i,j\in\mathbb{S}$ is assumed to be
irreducible throughout the paper, there exists a unique invariant
measure $\pi=(\pi_1,\pi_2,\cdots,\pi_N)$ such that
\begin{equation*}
\sum_{j=1}^N\pi_j\gamma_{ij}=0 \mbox{ and } \sum_{j=1}^N\pi_j =1.
\end{equation*}
Letting $\mu$ be the invariant measure $\pi$ and
$u_i,i\in\mathbb{S}$ be constants, together with the nonnegative
property of rate function, it follows that $I(\pi)$=0. Therefore,
\begin{equation*}
\lim\limits_{t\rightarrow\infty}\frac{1}{t}\log(
\mathbb{E}(\|u(t)\|^p))\geq-p\lambda_{n_0} +\sum\limits_{i=1}^N
g(i)\pi(i)
\end{equation*}
whenever $u_0$ is deterministic.}
\end{remark}

\begin{remark}
 {\rm Let us reconsider  Example \ref{example}, where $r(t)$ is a
right-continuous Markov chain taking values in state space
$\mathbb{S}=\{1,2\}$ with the generator
$\Gamma=(\gamma_{ij})_{2\times2}$:
\begin{equation*}
-\gamma_{11}=\gamma_{12}=1,\ \ \ \ \ -\gamma_{22}=\gamma_{21}=q>0.
\end{equation*}
By computation, the invariant measure of the Markov chain is
$\pi=(\pi_1,\pi_2)=(q/(q+1),1/(q+1))$. In what follows, we assume
that, for some probability measure $\mu_0=(\theta,1-\theta),
0<\theta<1$, on the state space $\mathbb{S}$,
\begin{equation*}
a\theta+b(1-\theta)-I(\mu_0)>a\pi_1+b\pi_2,
\end{equation*}
where $g(1)=a>0, g(2)=b>0$ and $g(i),i=1,2$ is defined in Theorem
\ref{theorem5}, that is,
\begin{equation*}
\begin{split}
(a-b)\theta+b&>a\pi_1+b\pi_2+\theta+(1-\theta)q-\inf\limits_{u_1,u_2>0}\Big[\theta\frac{u_2}{u_1}+(1-\theta)q\frac{u_1}{u_2}\Big]\\
&=a\pi_1+b\pi_2+\theta+(1-\theta)q-2\sqrt{\theta(1-\theta)q}.
\end{split}
\end{equation*}
Recalling $\pi_1=q/(q+1), \pi_2=1/(q+1)$ and letting
$\mu=\frac{1}{2}$, then the previous inequality can be rewritten as
\begin{equation}\label{00}
a+b>(q^2-2q^{\frac{3}{2}}+2q-2q^{\frac{1}{2}}+1)/(3q+1).
\end{equation}
Since the right hand side of the inequality above tends to zero
whenever $q\rightarrow1$, then we can choose suitable $q$ to satisfy
\eqref{00} and further get
\begin{equation*}
-p\lambda_{n_0}+\sup\limits_{\mu}\Big\{\sum\limits_{i=1}^2
g(i)\mu(i)-I(\mu)\Big\}>-p\lambda_{n_0}+\sum\limits_{i=1}^2
g(i)\pi_i-I(\pi).
\end{equation*}
Since $I(\pi)=0$ and $\lambda_{n_0}=1$, the  solution of Eq.
\eqref{eq15} will explode exponentially with probability one to
$\infty$ provided that $ -p+\sum_{i=1}^2 g(i)\pi_i>0. $ }
\end{remark}

So far we have assumed the Brownian motion is a scalar one.  However,
our theory  can
 easily be generalized to cope with a hybrid stochastic heat equation
driven by a multi-dimensional Brownian motion of the form as Xie \cite{x08}.
\begin{equation}\label{eq33}
\begin{cases}
\frac{\partial u(t,x)}{\partial t}=A
u(t,x)+\alpha(r(t))u(t,x)+\sum\limits_{i=1}^m\beta_i(r(t))u(t,x)\dot{B}_i(t),
\ t\geq0, x\in\mathcal {O};\\
u(t,x)=0, \ t\geq0, x\in\partial\mathcal {O}; \ \ \ \ \ \ \ \ \
u(0)=u^0, 
\end{cases}
\end{equation}
where $(B_1(t)),\cdots,B_m(t))$ is an $m$-dimensional Brownian
motion, $\alpha,\beta_i$ are mappings from
$\mathbb{S}\rightarrow\mathbb{R}$ and we write
$\beta_{ij}=\beta_i(j)$ for any $j\in\mathbb{S}$.  Following the
similar arguments to that of Theorems \ref{theorem4} and
\ref{theorem5}, we can derive the following theorems.

\begin{theorem}
{\rm The solution $\{u(t)\}_{t\ge 0}$ of Eq. \eqref{eq33} has the
following properties:\begin{enumerate}
\item[\textmd{(1)}] If $u^0$ is deterministic and $u^0\not= 0$, the sample Lyapunov exponent
\begin{equation*}
\lim\limits_{t\rightarrow\infty}\frac{1}{t}\log(\|u(t)\|)=-\Big(\lambda_{n_0}-\sum\limits_{i=1}^{N}\pi_i\Big(\alpha_i
-\frac{1}{2}\sum\limits_{j=1}^m\beta_{ij}^2\Big)\Big) \quad
\mbox{a.s.}
\end{equation*}
 In
particular, the  solution of Eq. \eqref{eq33} with initial data
$u^0$ will converge to zero exponentially with probability one
 if and only if
\begin{equation*}
\lambda_{n_0}>\sum\limits_{i=1}^{N}\pi_i\Big(\alpha_i
-\frac{1}{2}\sum\limits_{j=1}^m\beta_{ij}^2\Big).
\end{equation*}
\item[\textmd{(2)}] For any $u^0$, the sample Lyapunov exponent
\begin{equation*}
\lim\sup\limits_{t\rightarrow\infty}\frac{1}{t}\log(\|u(t)\|)\leq-\Big(\lambda_1-\sum\limits_{i=1}^{N}\pi_i\Big(\alpha_i
-\frac{1}{2}\sum\limits_{j=1}^m\beta_{ij}^2\Big)\Big) \quad\mbox{
a.s.}
\end{equation*}
 In
particular, the solution of Eq.\eqref{eq33} is almost surely
exponentially stable if
\begin{equation*}
\lambda_1>\sum\limits_{i=1}^{N}\pi_i\Big(\alpha_i
-\frac{1}{2}\sum\limits_{j=1}^m\beta_{ij}^2\Big).
\end{equation*}
\end{enumerate}
}
\end{theorem}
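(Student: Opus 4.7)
The plan is to mimic the scalar case (Theorem \ref{theorem4}) by first obtaining an explicit representation analogous to \eqref{eq14}, and then extracting the sample exponent via the strong law of large numbers for continuous martingales combined with the ergodic theorem for the Markov chain.

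First I would establish the multi-dimensional analogue of Theorem \ref{theorem3}. For each $n\geq 1$, the scalar hybrid SDE driven by the independent Brownian motions $B_1,\ldots,B_m$,
\begin{equation*}
dz_n(t)=(-\lambda_n+\alpha(r(t)))z_n(t)dt+\sum_{i=1}^m\beta_i(r(t))z_n(t)dB_i(t),\qquad z_n(0)=u^0_n,
\end{equation*}
has by the standard linear-SDE formula (\cite[Theorem 5.22, p182]{my06} applied componentwise, using the mutual independence of the $B_i$'s) the explicit solution
\begin{equation*}
z_n(t)=u^0_n\exp\Big\{-\lambda_n t+\int_0^t\Big[\alpha(r(s))-\tfrac{1}{2}\sum_{i=1}^m\beta_i^2(r(s))\Big]ds+\sum_{i=1}^m\int_0^t\beta_i(r(s))dB_i(s)\Big\}.
\end{equation*}
Then, repeating the termwise verification carried out in the proof of Theorem \ref{theorem3}, the process $u(t,x):=\sum_{n\geq 1}z_n(t)e_n(x)$ is the unique solution of \eqref{eq33}, and it factorizes as
\begin{equation*}
u(t,x)=v(t,x)\exp\Big\{-\tfrac{1}{2}\sum_{i=1}^m\int_0^t\beta_i^2(r(s))ds+\sum_{i=1}^m\int_0^t\beta_i(r(s))dB_i(s)\Big\},
\end{equation*}
where $v(t,x)$ is the explicit solution \eqref{eq6} of the Markov-switched heat equation \eqref{eq2}.

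Next, taking norms and logarithms,
\begin{equation*}
\frac{1}{t}\log\|u(t)\|=\frac{1}{t}\log\|v(t)\|-\frac{1}{2t}\sum_{i=1}^m\int_0^t\beta_i^2(r(s))ds+\frac{1}{t}\sum_{i=1}^m\int_0^t\beta_i(r(s))dB_i(s).
\end{equation*}
Since the $B_i$ are independent of $r(\cdot)$ and of each other, each continuous martingale $M_i(t):=\int_0^t\beta_i(r(s))dB_i(s)$ has quadratic variation bounded by $t\max_{j\in\mathbb{S}}\beta_{ij}^2$, so the strong law of large numbers for local martingales (\cite[Theorem 1.6, p16]{my06}) gives $t^{-1}M_i(t)\to 0$ a.s. for each $i$. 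The ergodic theorem for the irreducible Markov chain $r(\cdot)$ yields
\begin{equation*}
\lim_{t\to\infty}\frac{1}{t}\int_0^t\beta_i^2(r(s))ds=\sum_{j=1}^N\pi_j\beta_{ij}^2\quad\text{a.s.}
\end{equation*}

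Finally, applying Theorem \ref{theorem2}(1) to $v(t)$ in the case $u^0$ deterministic and nonzero, and Theorem \ref{theorem2}(2) in the general case, yields the equality
\begin{equation*}
\lim_{t\to\infty}\frac{1}{t}\log\|u(t)\|=-\Big(\lambda_{n_0}-\sum_{j=1}^N\pi_j\alpha_j\Big)-\frac{1}{2}\sum_{i=1}^m\sum_{j=1}^N\pi_j\beta_{ij}^2=-\Big(\lambda_{n_0}-\sum_{j=1}^N\pi_j\big(\alpha_j-\tfrac{1}{2}\sum_{i=1}^m\beta_{ij}^2\big)\Big)
\end{equation*}
in case (1) and the corresponding $\limsup$ bound with $\lambda_1$ in case (2). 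The only step requiring genuine care is the independent verification of the explicit representation for the multi-dimensional driving noise, since this underlies the Brownian contribution drop-out; the remaining arguments are direct copies of those for Theorem \ref{theorem4}.
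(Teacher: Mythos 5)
Your proof is correct and follows exactly the route the paper intends: the paper gives no separate proof of this theorem, stating only that it follows "by similar arguments to Theorems \ref{theorem4} and \ref{theorem5}," and your argument is precisely that adaptation — the multi-dimensional explicit factorization $u=v\cdot\exp\{\cdots\}$, the strong law of large numbers for each martingale $\int_0^t\beta_i(r(s))dB_i(s)$, the ergodic theorem for $r(\cdot)$, and Theorem \ref{theorem2} applied to $v$. No gaps; the computation of the resulting exponent matches the stated formula.
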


\begin{theorem}
{\rm The  solution $\{u(t)\}_{t\ge 0}$  of Eq. \eqref{eq33} has the
following properties:\begin{enumerate}
\item[\textmd{(1)}]  If $u^0$ is deterministic and $u^0\not= 0$,  the
$p$th moment Lyapunov exponent is
\begin{equation*}
\lim\limits_{t\rightarrow\infty}\frac{1}{t}\log(\mathbb{E}(\|u(t)\|^p))=-p\lambda_{n_0}+\sup\limits_{\mu}\Big\{\sum\limits_{i=1}^N\Big(p\alpha
+\frac{p(p-1)}{2}\sum\limits_{j=1}^m\beta_{ij}^2\Big)\mu(i)-I(\mu)\Big\},
\end{equation*}
where the supremum is taken over probability measures $\mu$ on
$\mathbb{S}$ and $I(\mu)$ is defined by \eqref{eq03}. In particular,
the $p$th moment of the  solution of Eq. \eqref{eq33}
 with initial data $u^0$ will  converge exponentially to zero
 if and only if
\begin{equation*}
p\lambda_{n_0}-\sup\limits_{\mu}\Big\{\sum\limits_{i=1}^N\Big(p\alpha
+\frac{p(p-1)}{2}\sum\limits_{j=1}^m\beta_{ij}^2\Big)\mu(i)-I(\mu)\Big\}>0.
\end{equation*}
\item[\textmd{(1)}]  For any $u^0$, the $p$th moment Lyapunov exponent
\begin{equation*}
\lim\sup\limits_{t\rightarrow\infty}\frac{1}{t}\log(\mathbb{E}(\|u(t)\|^p))\leq-p\lambda_1+\sup\limits_{\mu}\Big\{\sum\limits_{i=1}^N\Big(p\alpha
+\frac{p(p-1)}{2}\sum\limits_{j=1}^m\beta_{ij}^2\Big)\mu(i)-I(\mu)\Big\}.
\end{equation*}
In particular, the  solution of Eq. \eqref{eq33} is exponentially
stable in $p$th moment if
\begin{equation*}
-p\lambda_1+\sup\limits_{\mu}\Big\{\sum\limits_{i=1}^N\Big(p\alpha
+\frac{p(p-1)}{2}\sum\limits_{j=1}^m\beta_{ij}^2\Big)\mu(i)-I(\mu)\Big\}<0.
\end{equation*}
\end{enumerate}
}
\end{theorem}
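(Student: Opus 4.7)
The plan is to parallel the proof of Theorem \ref{theorem5} verbatim, replacing scalar stochastic integrals by sums of stochastic integrals against independent Brownian motions, so that the only arithmetic change is $\beta^2(r(s))\rightsquigarrow\sum_{j=1}^{m}\beta_j^2(r(s))$. Everything will then reduce, as before, to a Donsker--Varadhan large-deviation statement for the occupation measure of $r(\cdot)$ applied to a modified function $g$.

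First I would establish a multi-dimensional analogue of Theorem \ref{theorem3}. Expanding $u(t)=\sum_{n\ge 1}z_n(t)e_n$, each Fourier coefficient satisfies the scalar hybrid SDE
\begin{equation*}
dz_n(t)=(-\lambda_n+\alpha(r(t)))z_n(t)dt+\sum_{j=1}^m\beta_j(r(t))z_n(t)dB_j(t),\quad z_n(0)=u_n^0,
\end{equation*}
whose explicit solution, by the multi-dimensional It\^o formula on each interval $[\tau_k,\tau_{k+1})$ of constancy of $r$, is
\begin{equation*}
z_n(t)=u_n^0\exp\Big\{-\lambda_n t+\int_0^t\Big[\alpha(r(s))-\tfrac{1}{2}\sum_{j=1}^m\beta_j^2(r(s))\Big]ds+\sum_{j=1}^m\int_0^t\beta_j(r(s))dB_j(s)\Big\}.
\end{equation*}
Summing gives $u(t,x)=v(t,x)\exp\{-\tfrac12\int_0^t\sum_j\beta_j^2(r(s))ds+\sum_j\int_0^t\beta_j(r(s))dB_j(s)\}$, which is the only place where the multi-dimensionality of the driving noise enters the structural formula.

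Second, I would repeat the stopping-time telescoping in the proof of Theorem \ref{theorem5}: writing $\mathbb{E}\|u(t\wedge\tau_k)\|^p$ as $\mathbb{E}[\|v(t\wedge\tau_k)\|^p\xi(t\wedge\tau_k)\prod_{j=0}^{k-1}\zeta_j(t)]$ with
\begin{equation*}
\xi(t)=\exp\Big\{\tfrac{p(p-1)}{2}\int_0^t\sum_{j=1}^m\beta_j^2(r(s))ds\Big\},
\end{equation*}
\begin{equation*}
\zeta_j(t)=\exp\Big\{-\tfrac{p^2}{2}\sum_{\ell=1}^m\beta_\ell^2(r(\tau_j))(t\wedge\tau_{j+1}-t\wedge\tau_j)+\sum_{\ell=1}^m p\beta_\ell(r(\tau_j))[B_\ell(t\wedge\tau_{j+1})-B_\ell(t\wedge\tau_j)]\Big\}.
\end{equation*}
Conditioning on $\mathcal{G}_{t\wedge\tau_{k-1}}$ and using that $(B_1,\dots,B_m)$ has independent components, the conditional Laplace transform of the Gaussian increment vector produces exactly the cancellation $\mathbb{E}[\zeta_{k-1}(t)\mid\mathcal{G}_{t\wedge\tau_{k-1}}]=1$. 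Iterating and letting $k\to\infty$ yields $\mathbb{E}\|u(t)\|^p=\mathbb{E}[\|v(t)\|^p\xi(t)]$, the direct generalization of \eqref{eq25}. The same sandwich argument as in \eqref{eq04} (using that $\lambda_n$ is increasing and $\|e_n\|=1$) gives, in the deterministic case $u^0\ne 0$,
\begin{equation*}
|u^0_{n_0}|^p\mathbb{E}\Big(e^{-p\lambda_{n_0}t+\int_0^t g(r(s))ds}\Big)\le\mathbb{E}(\|v(t)\|^p\xi(t))\le\|u^0\|^p\mathbb{E}\Big(e^{-p\lambda_{n_0}t+\int_0^t g(r(s))ds}\Big),
\end{equation*}
with $g(i)=p\alpha_i+\tfrac{p(p-1)}{2}\sum_{\ell=1}^m\beta_{\ell i}^2$; and in the general case, using independence of $u^0$ from $(r,B)$, an upper bound with $\lambda_{n_0}$ replaced by $\lambda_1$ and the extra factor $\mathbb{E}\|u^0\|^p$.

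Finally, since $\int_0^t g(r(s))ds=t\sum_{i=1}^N g(i)L_t(\omega,i)$ is a linear (hence weakly continuous) functional of the occupation measure, Lemma \ref{large deviation} applied to the continuous test functional $\Phi(\mu)=\sum_{i}g(i)\mu(i)$ supplies
\begin{equation*}
\lim_{t\to\infty}\tfrac{1}{t}\log\mathbb{E}\Big(e^{\int_0^t g(r(s))ds}\Big)=\sup_{\mu\in\mathscr{M}}\Big\{\sum_{i=1}^N g(i)\mu(i)-I(\mu)\Big\},
\end{equation*}
which, combined with the above bounds, delivers both \eqref{eq22}-type identity and the $\limsup$ estimate, and hence the stability criteria. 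The main obstacle is the identity $\mathbb{E}\|u(t)\|^p=\mathbb{E}(\|v(t)\|^p\xi(t))$: one must carefully handle the vector-valued Gaussian conditional expectation across the stopping times $\tau_k$, exploiting the independence of the $B_\ell$'s and the $\mathcal{G}_{t\wedge\tau_{k-1}}$-measurability of $r(\tau_{k-1})$ and $t\wedge\tau_k-t\wedge\tau_{k-1}$, so that the cross terms disappear and only the diagonal contribution $\sum_{\ell}\beta_\ell^2(r(\tau_{k-1}))$ survives in the exponent.
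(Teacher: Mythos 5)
Your proposal is correct and follows exactly the route the paper intends: the paper gives no separate proof for this theorem, stating only that it follows ``by similar arguments'' to Theorems \ref{theorem4} and \ref{theorem5}, and your expansion reproduces that argument faithfully --- the multi-dimensional explicit solution, the identity $\mathbb{E}\|u(t)\|^p=\mathbb{E}(\|v(t)\|^p\xi(t))$ via the stopping-time telescoping with the vector Gaussian increments (where independence of the $B_\ell$ factors the conditional Laplace transform so that $\mathbb{E}[\zeta_{k-1}(t)\mid\mathcal{G}_{t\wedge\tau_{k-1}}]=1$), and the Donsker--Varadhan lemma applied to $g(i)=p\alpha_i+\tfrac{p(p-1)}{2}\sum_{\ell=1}^m\beta_{\ell i}^2$. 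No gaps.
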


\end{document}